\newcommand{\ep}{{\epsilon}}
\numberwithin{equation}{section}
\shorttitle{Convergence of LCS in Markov models}
\begin{document}

\title{On the rate of convergence for the length of the longest common subsequences in hidden Markov models}

\authorone[Georgia Institute of Technology]{C. Houdr\'e}
\addressone{School of Mathematics,  686 Cherry St. NW, Atlanta, GA 30332-0160 USA}
\emailone{houdre@math.gatech.edu}

\authortwo[Georgia Institute of Technology]{G. Kerchev}
\emailtwo{kerchev@math.gatech.edu}

\begin{abstract} Let $(X, Y) = (X_n, Y_n)_{n \geq 1}$ be the output process generated by a hidden chain $Z = (Z_n)_{n \geq 1}$, where $Z$ is a finite state, aperiodic, time homogeneous, and irreducible Markov chain. Let $LC_n$ be the length of the longest common subsequences of $X_1, \ldots, X_n$ and $Y_1, \ldots, Y_n$. Under a  mixing hypothesis, a rate of convergence result is obtained for $\mathbb{E}[LC_n]/n$.
\end{abstract}

\keywords{ Longest Common Subsequences; Rate of Convergence; Hidden Markov Model; Hoeffding Inequality;  Mixing Conditions}

\ams{  60C05, 05A05}{60F10}

\section{Introduction}

Longest common subsequences are often a key measure of similarity between two strings of letters. For two finite sequences  $(X_1, \ldots, X_n)$ and $(Y_1, \ldots, Y_m)$ taking values in a finite alphabet $\mathcal{A}$, the object of study is $LCS(X_1, \ldots, X_n; Y_1, \ldots, Y_m)$, the length of the longest common subsequences of $X_1, \ldots, X_n$ and $Y_1, \ldots, Y_m$, which is  abbreviated as $LC_n$ when $n = m$. Clearly $LC_n$ is  the largest $k$ such that there exist  $1 \leq i_1 < \cdots <  i_k \leq n$ and $1 \leq j_1 < \cdots < j_k \leq n$ with 

\begin{equation}
\notag  X_{i_s} = Y_{j_s} \text {, for all } s = 1, 2, 3, \ldots, k. 
\end{equation}

\noindent  For two independent words sampled independently and uniformly at random from the alphabet, Chv\'atal and Sankoff~\cite{CS}  proved that $\lim_{n \to \infty} \mathbb{E}[LC_n]/n = \gamma^*$  and provided upper and lower bounds on $\gamma^*$. This was followed by Alexander~\cite{A} who obtained, for iid draws, the following generic rate of convergence result:

\begin{equation}\label{origin_KA}
 n \gamma^* - C \sqrt{n \log n}  \leq \mathbb{E}[LC_n] \leq n \gamma^*,
\end{equation}

\noindent where $C > 0$ is an absolute constant.

\noindent From a practical point of view the  independence assumptions, both between words and also among draws,  has to be relaxed as they are often lacking. One such instance is in the field of computational biology where one compares similarities between two biological sequences. In particular alignments of those sequences need to be qualified as occurring by chance or because of a structural relation. One way to generate alignments is with a hidden Markov model (HMM). The states of the hidden chain account for a match between two elements in $X$ and $Y$ or for an alignment of an element with a gap. Given $X$ and $Y$ one can find the most probable alignment using the Viterbi algorithm. This model is particularly useful when the similarity between $X$ and $Y$ is  weak. In this case standard methods for pairwise alignment often fail to identify the correct alignment or test for its significance. With a hidden Markov model  one can evaluate the total probability that $X$ and $Y$ are aligned by summing up over all alignments, and this sum can be efficiently computed with the Forward algorithm.  For more information we refer the reader to Chapter 4 in~\cite{DEKM}.

\noindent There are very few results on the asymptotics of the longest common subsequences in a model exhibiting dependence properties. A rare instance is due to Steele~\cite{S} who showed the convergence of $\mathbb{E}[LC_n]/n$ when $(X,Y)$  is a random sequence for which there is a stationary ergodic coupling, e.g., an irreducible, aperiodic, positive recurrent Markov chain. The present paper studies the longest common subsequences for strings exhibiting a different Markov relation, namely we study the case when  $(X,Y)$ is  emitted by a latent  Markov chain $Z$, i.e., when $(Z, (X,Y))$ is a hidden Markov model.  Note that this framework includes the special case when $(Z, X)$ and $(Z', Y)$ are hidden Markov models, with the same parameters, while $Z$ and $Z'$ are independent. In our setting, mean convergence is quickly proved in Section~\ref{s:CM}. Then, the main contribution is a rate of convergence result, obtained in Section~\ref{s:RC}, which   recovers, in particular,~\eqref{origin_KA}.

\noindent \emph{Throughout this manuscript our probability space $(\Omega, \mathcal{F}, \mathbb{P})$ is assumed to be rich enough to consider all the random variables we are studying. }

\section{Mean convergence}\label{s:CM} 
Recall that a hidden Markov model $(Z, V)$ consists of a Markov chain $Z = (Z_n)_{n \geq 1}$ which emits the observed variables $V = (V_n)_{n \geq 1}$. The possible states in $Z$ are each associated with a distribution on the values of $V$. In other words the observation $V$ is a mixture model where the choice of the mixture component for each observation depends on the component of the previous observation. The mixture components are given by the sequence $Z$.  Note also that given $Z$, $V$ is a Markov chain. For such a model our first easy result asserts the mean convergence of $LC_n$.

\begin{prop}\label{prop:mean}
Let $Z$ be an aperiodic, irreducible, time homogeneous finite state space Markov chain. Let $\mu$, $P$, and  $\pi$ be respectively the initial distribution, transition matrix and stationary distribution of $Z$. Let each $Z_n$, $n \geq 1$, generate a pair $(X_n, Y_n)$ according to a distribution associated to the state of $Z_n$, i.e., let $(Z, (X, Y))$ be a hidden Markov model, where $X = (X_n)_{n \geq 1}$ and $Y = (Y_n)_{n \geq 1}$. Further,  for all $i \geq 1$ and $j \geq 1$, let   $X_i$ and $Y_j$ take their values in the common finite alphabet $\mathcal{A}$ and let there exists $a \in \mathcal{A}$, such that $\mathbb{P} (X_i = Y_j = a) > 0$, for some $i\geq 1$ and $j \geq 1$. Then,  
\begin{equation}
\notag \lim_{n \to \infty} \frac{\mathbb{E}[LC_n]}{n} = \gamma^*, 
\end{equation}

\noindent where $\gamma^* \in (0, 1]$. 

\end{prop}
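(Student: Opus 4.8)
The plan is to exploit the superadditivity of the longest-common-subsequence functional together with Fekete's lemma, carrying the argument out first under the stationary initialization and then transferring it to an arbitrary initial law $\mu$ by a coupling. For $1 \leq p \leq q$, write $L_{[p,q]} = LCS(X_p,\ldots,X_q;Y_p,\ldots,Y_q)$, so that $LC_n = L_{[1,n]}$, and let $\nu_s$ denote the emission law of the pair $(X_k,Y_k)$ when $Z_k = s$. The combinatorial heart of the matter is the deterministic inequality
\[
 L_{[1,n+m]} \;\geq\; L_{[1,n]} + L_{[n+1,n+m]},
\]
obtained by concatenating an optimal common subsequence of the first block with one of the second block.

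First I would assume $\mu = \pi$, so that $(X_n,Y_n)_{n \geq 1}$ is a stationary sequence. Taking expectations in the displayed inequality and invoking stationarity, which gives $\mathbb{E}[L_{[n+1,n+m]}] = \mathbb{E}[L_{[1,m]}] = \mathbb{E}[LC_m]$, yields the superadditivity $\mathbb{E}[LC_{n+m}] \geq \mathbb{E}[LC_n] + \mathbb{E}[LC_m]$. Fekete's lemma then furnishes the limit $\gamma^* = \lim_n \mathbb{E}[LC_n]/n = \sup_n \mathbb{E}[LC_n]/n$, and since $0 \leq LC_n \leq n$ one gets $\gamma^* \leq 1$ at once.

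It then remains to bound $\gamma^*$ away from $0$, and I expect this to be the step requiring the most care, because the hypothesis only furnishes indices $i,j$, \emph{possibly distinct}, with $\mathbb{P}(X_i = Y_j = a) > 0$. One should therefore not attempt to produce matches along the diagonal $X_k = Y_k$, but instead count single-letter matches: a common subsequence consisting solely of the letter $a$ shows that $LC_n \geq \min(N_X,N_Y)$, where $N_X = \#\{k \leq n : X_k = a\}$ and $N_Y = \#\{k \leq n : Y_k = a\}$. The enlarged process $(Z_n,X_n,Y_n)_{n \geq 1}$ is a finite-state, irreducible, aperiodic Markov chain, so the ergodic theorem gives $N_X/n \to \bar p_X := \sum_s \pi_s\,\nu_s(\{X = a\})$ and $N_Y/n \to \bar p_Y := \sum_s \pi_s\,\nu_s(\{Y = a\})$ almost surely. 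The assumption forces $\mathbb{P}(X_i = a) > 0$ and $\mathbb{P}(Y_j = a) > 0$, so there exist states of positive $\pi$-mass emitting $a$ in the $X$- and in the $Y$-coordinate, whence $\bar p_X, \bar p_Y > 0$; as the quantities are bounded by $1$ after division by $n$, bounded convergence upgrades the almost sure limit to $\gamma^* \geq \min(\bar p_X, \bar p_Y) > 0$.

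Finally, to remove the assumption $\mu = \pi$, I would couple the chain started from $\mu$ with the stationary one so that they coincide from a random time $T$ onward, driving the emissions with the same randomness; since $Z$ is finite, irreducible and aperiodic, $\mathbb{E}[T] < \infty$. The two output sequences then differ only in coordinates $1,\ldots,T-1$, and since altering a single coordinate changes an LCS by at most one, the two values of $LC_n$ differ by at most $2(T-1)$. Taking expectations gives $|\mathbb{E}[LC_n^\mu] - \mathbb{E}[LC_n^\pi]| \leq 2\,\mathbb{E}[T]$, a bound independent of $n$; dividing by $n$ shows that the limit is unaffected and equals the same $\gamma^* \in (0,1]$ for every initial distribution $\mu$.
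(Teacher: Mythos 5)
Your proposal is correct, and its skeleton (superadditivity plus Fekete's lemma in the stationary case, then a coupling to transfer to general $\mu$) matches the paper's. However, two of your steps take genuinely different routes. For the positivity $\gamma^* > 0$, the paper uses aperiodicity and irreducibility to get all entries of $P^k$ bounded below by some $\ep > 0$, deduces $\mathbb{P}(X_1 = Y_{k+1}) > p$, and then counts the staircase of matches $\mathbf{1}_{X_1 = Y_{k+1}} + \mathbf{1}_{X_{k+1} = Y_{2k+1}} + \cdots$, giving $\gamma^* \geq p/(k+1)$; you instead use the single-letter common subsequence bound $LC_n \geq \min(N_X, N_Y)$ together with the ergodic theorem for the (finite, irreducible on its support) chain $(Z_n, X_n, Y_n)$, giving $\gamma^* \geq \min(\bar p_X, \bar p_Y)$. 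Your version handles the hypothesis with possibly distinct indices $i \neq j$ very transparently, needs no quantitative bound on $P^k$, and yields a cleaner explicit constant; the paper's version is more hands-on but supplies the same positivity. For the reduction to $\mu = \pi$, the paper truncates at $K = \sqrt{n}$ and invokes Doeblin's estimate $\mathbb{P}(\tau > K) \leq c\alpha^K$ to bound the discrepancy by $n\mathbb{P}(\tau > K) + K$, whereas you exploit the Hamming--Lipschitz property of the LCS to get the $n$-free bound $|\mathbb{E}[LC_n^\mu] - \mathbb{E}[LC_n^\pi]| \leq 2\mathbb{E}[T]$, needing only $\mathbb{E}[T] < \infty$. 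Both are valid; your bound is simpler and even stronger for the mere convergence statement, while the paper's explicit exponential-tail form is what gets recycled as the error terms $\frac{1}{\sqrt n} + c\alpha^{\sqrt n}$ in its later rate-of-convergence theorem. Two small points you should make explicit: the enlarged chain $(Z_n, X_n, Y_n)$ is irreducible only after restricting to its support $\{(z,x,y) : \nu_z(x,y) > 0\}$ (which is all the ergodic theorem needs), and the claim $\mathbb{E}[T] < \infty$ deserves the one-line justification via the geometric tail of the Doeblin coupling time --- exactly the estimate the paper proves.
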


\begin{proof} If $\mu = \pi$, the sequence $(X, Y)$ is stationary and therefore by superadditivity and  Fekete's lemma or Kingman's subadditivity theorem (see~\cite{St}) imply: 
\begin{equation}\label{eq:mean}
 \lim_{n \to \infty} \frac{\mathbb{E}[LC_n]}{n} = \sup_{k \geq 1} \frac{\mathbb{E}[LC_k]}{k} =  \gamma^*, 
\end{equation}

\noindent for some $\gamma^* \in (0, 1]$.  When $\mu \neq \pi$, a coupling technique will prove the result. Let $\overline{Z}$ be a Markov chain with initial and  stationary distribution $\pi$ and having the same transition matrix $P$ as the chain $Z$. Assume, further, that the emission probabilities are the same for $Z$ and $\overline{Z}$ and denote by $(\overline{Z}, (\overline{X}, \overline{Y}))$ the corresponding HMM. Next consider the coupling $(Z, \overline{Z})$ where the two chains stay together after the first time $i$ for which $Z_i = \overline{Z}_i$, and let $\tau$ be the meeting time of $Z$ and $\overline{Z}$. Next, and throughout,  let $X^{(n)} \coloneqq (X_1, \ldots, X_n)$ and similarly for $Y^{(n)}, \overline{X}^{(n)}$ and $\overline{Y}^{(n)}$. Since $LCS(X^{(n)}; Y^{(n)}) - LCS(\overline{X}^{(n)}; \overline{Y}^{(n)}) \leq n$, then for any $K>0$, 
\begin{align}
\notag    |& \mathbb{E}  [ LCS(X^{(n)}; Y^{(n)}) - LCS(\overline{X}^{(n)}; \overline{Y}^{(n)})] | \\
\notag = &   \quad \bigg| \mathbb{E} \left[  [LCS(X^{(n)}; Y^{(n)}) - LCS(\overline{X}^{(n)}; \overline{Y}^{(n)})] \mathbf{1}_{\tau > K} \right] \\
\notag & \quad \quad \quad  \quad \quad   + \mathbb{E} \left[ [  LCS(X^{(n)}; Y^{(n)}) - LCS(\overline{X}^{(n)}; \overline{Y}^{(n)})] \mathbf{1}_{\tau \leq K} \right] \bigg| \\
\notag  \leq &  \quad  n \mathbb{P}(\tau > K) + K + \left| \mathbb{E}\left[ [ LCS^K(X^{(n)}; Y^{(n)}) - LCS^K(\overline{X}^{(n)}; \overline{Y}^{(n)}) ] \mathbf{1}_{\tau \leq K} \right]\right| \\
\label{eq:mean_coupling}  \leq &  \quad n \mathbb{P}(\tau > K) + K,
\end{align}

\noindent where $LCS^K(\cdot;\cdot)$ is now the length of the longest common subsequences restricted to the letters $X_i$ and $Y_i$, for $i > K$,  noting also that when $\tau \leq K$, then $LCS^K(X^{(n)}; Y^{(n)})$ and $LCS^K(\overline{X}^{(n)}; \overline{Y}^{(n)})$ are identically distributed. If $K \in (mk, m(k+1)]$, for some $m \geq 0$, by an argument going back to Doeblin~\cite{D} (see also ~\cite{Thorisson}),
\begin{align}
\notag \mathbb{P}& (\tau > K) \\
\notag & \leq \quad  \mathbb{P} (Z_k \neq \overline{Z_k}, Z_{2k} \neq \overline{Z_{2k}}, \ldots, Z_{mk} \neq \overline{Z_{mk}}) \\
\notag & = \quad \mathbb{P} (Z_k \neq \overline{Z_k}) \mathbb{P}( Z_{2k} \neq \overline{Z_{2k}} | Z_k \neq \overline{Z_k})  \cdots \mathbb{P}( Z_{mk} \neq \overline{Z_{mk}}| Z_{(m-1)k} \neq \overline{Z_{(m-1)k}}) \\
\notag & \leq  \quad (1 - \ep)^{m-1} \\ 
\label{eq:doeb} & \leq  \quad c \alpha^K,
\end{align}

\noindent where $\alpha = \sqrt[k]{1 - \ep} \in (0,1)$ and $c = 1/ (1 - \ep)^2$. Therefore,  $\tau$ is finite with probability one. Choosing $K = \sqrt{n}$, yields $\mathbb{P}(\tau > K) + K/n \to 0$ and finally  $\mathbb{E}[LC_n]/n \to \gamma^*$, as $n \to \infty$.

\noindent  Clearly, $\mathbb{E}[LC_n] \leq n$ and to see that $\gamma^* > 0$, note first that, by aperiodicity and irreducibility, $P^k \geq  \ep$, for some fixed $k$ and $\ep > 0$, i.e., all the entries of the matrix $P^k$ are larger than some positive quantity $\ep$. Therefore $\mathbb{P}(X_1 = Y_{k + 1}) > p$, for some $p  = p(k, \ep) > 0$. Now,
\begin{equation}\label{eq:gamma_lb}
 LC_{nk+1} \geq \mathbf{1}_{X_1 = Y_{k+1}} + \mathbf{1}_{X_{k+1} = Y_{2k+1}} + \cdots + \mathbf{1}_{X_{(n-1)k + 1} = Y_{nk + 1}},
\end{equation}

\noindent hence 
\begin{equation}
\notag  \frac{n p}{nk + 1} \leq \frac{\mathbb{E}[LC_{nk+1} ]}{nk+1}.    
\end{equation}

\noindent Letting $n \to \infty$ implies that $\gamma^* \in [p/(k+1), 1] \subset (0,1]$, since $p > 0$. 

\end{proof}

\begin{rem}\label{rem:first}

\noindent (i) Under a further assumption, one can show that $\gamma^* > \mathbb{P}(X_1 = Y_1)$. Indeed, assume that for all $x, y \in \mathcal{A}, z \in \mathcal{S}$,  $\mathbb{P} (X_i = x, Y_i  = y | Z_i =z) = \mathbb{P} (X_i = y , Y_i = x | Z_i  = z) > 0$, and let $Z$ be started at the stationary distribution. Then for any $n \geq 2$, 
\begin{align}
\notag \mathbb{E}[LC_n]  \geq  & \quad \mathbb{E}[LC_{n-2} \mathbf{1}_{X_n = Y_n, X_{n-1} = Y_{n-1}}] + 2 \mathbb{P}( X_n = Y_n, X_{n-1} = Y_{n-1}) \\
\notag & + \mathbb{E}[LC_{n-2} \mathbf{1}_{X_n = Y_n, X_{n-1} \neq Y_{n-1}}] +  \mathbb{P}( X_n \neq Y_n, X_{n-1} = Y_{n-1})  \\
\notag & + \mathbb{E}[LC_{n-2} \mathbf{1}_{X_n \neq Y_n, X_{n-1} = Y_{n-1}} ]+  \mathbb{P}( X_n = Y_n, X_{n-1} \neq Y_{n-1})  \\
\notag & + \mathbb{E}[LC_{n-2} \mathbf{1}_{X_n \neq Y_n, X_{n-1} \neq Y_{n-1}} ]+  \mathbb{P}( X_n \neq Y_n, X_{n-1} \neq Y_{n-1}, X_n = Y_{n-1})  \\
\notag > & \quad \mathbb{E}[LC_{n-2}] +  \mathbb{P}( X_n = Y_n) + \mathbb{P}(X_{n-1} = Y_{n-1})\\
\notag = & \quad \mathbb{E}[LC_{n-2}] + 2 \mathbb{P} (X_1 = Y_1),
\end{align}

\noindent by stationarity. Therefore, iterating, still using stationarity, and since $\mathbb{E}[LC_0] = 0$ while $\mathbb{E}[LC_1] = \mathbb{P}(X_1 = Y_1)$, it follows that for  $n \geq 2$, $\mathbb{E}[LC_n] > n \mathbb{P}(X_1 = Y_1)$. Finally, 
\begin{align}
\notag \gamma^* > \mathbb{P}(X_1 = Y_1) = \sum_{\alpha \in \mathcal{A} } \mathbb{P}(X_1 = \alpha) \mathbb{P}(Y_1 = \alpha),
\end{align}

\noindent and this inequality is strict since Fekete's lemma, e.g., see~\cite{St}, ensures that $\gamma^* = \sup_{n} \mathbb{E}[LC_n]/n$.

\noindent (ii) Steele's general result, see~\cite{S}, asserts that Proposition~\ref{prop:mean} holds if there is a stationary ergodic coupling for  $(X, Y)$.  Such an example is when the sequences $X$ and $Y$ are generated by two  independent aperiodic, homogeneous and irreducible hidden Markov chains with the same parameters (and so the same emission probabilities). Indeed, at first, when the hidden chains $Z_X$ and $Z_Y$  generating respectively $X$ and $Y$ are started at the stationary distribution, convergence of $\mathbb{E}[LC_n]/n$ towards $\gamma^*$, follows from super-additivity and  Fekete's lemma (see~\cite{St}). As  previously,   $\gamma^* > 0$,  since the properties of the hidden chains imply~\eqref{eq:gamma_lb}. Then, when the initial distribution is not the stationary distribution, one can proceed with  arguments as above. In particular let $\tau_1$ and $\tau_2$ be the respective meeting times of the  chains $(Z_X, \overline{Z_X})$ and $(Z_Y, \overline{Z_Y})$, and let $\tau = \max (\tau_1, \tau_2)$. Then, equation~\eqref{eq:mean_coupling} continues to hold:
\begin{align}
\nonumber \left| \mathbb{E} [ LCS(X; Y) - LCS(\overline{X}; \overline{Y})] \right| & \leq n \mathbb{P}(\tau > K) + K \\
 \label{eq:error_2HMM}& \leq 2n \mathbb{P}(\tau_1 > K) + K.
\end{align}

\noindent Taking $K = \sqrt{n}$ and noting the exponential decay of $\mathbb{P}(\tau_1 > K)$ finishes the corresponding proof. 

\end{rem}

\section{Rate of convergence}\label{s:RC}

\noindent The previous section gives a  mean convergence result, we now deal with its rate. Again let $(X,Y)$ be the outcome of a hidden Markov chain $Z$ with $\mu, P$ and $\pi$ as initial distribution, transition matrix and stationary distribution respectively. In this section we impose the additional restriction that the emission distributions for all states in the hidden chain are symmetric (this is discussed further in Proposition~\ref{prop_asymetry} and in the Appendix), namely for all $x, y \in \mathcal{A}$ and all $z \in \mathcal{S}$, $\mathbb{P}(X_i = x, Y_i = y | Z_i = z) =\mathbb{P}(X_i = y, Y_i = x | Z_i = z)$. Symmetry clearly implies that the conditional law of $X$ given $Z$ and of $Y$ given $Z$ are the same since for all $x, y$ and $z$, 
\begin{align}
\notag \mathbb{P}(X_i=x|Z_i=z) = & \sum_{y \in \mathcal{A}} \mathbb{P}(X_i=x, Y_i = y|Z_i =z) = \sum_{y \in \mathcal{A}} \mathbb{P}(X_i=y, Y_i = x|Z_i =z) \\
\notag = & \quad \mathbb{P}(Y_i = x|Z_i =z).
\end{align}
\noindent  In turn this implies that $X_i$ and $Y_i$ are identically distributed.

  Moreover,  one needs to control the dependency between $X$ and $Y$ and a  way to do so is via the $\beta-$mixing coefficient, as given in Definition 3.3 of~\cite{Br} which we now recall.

\begin{defn}
Let  $\mathcal{F}_1$ and $\mathcal{F}_2$ be two $\sigma-$fields $\subset \mathcal{F}$, then the $\beta-$mixing coefficient, associated with these sub-$\sigma$-fields of $\mathcal{F}$, is given by:
\begin{equation}
\notag \beta(\mathcal{F}_1, \mathcal{F}_2) \coloneqq \frac{1}{2} \sup  \sum_{i =1}^I \sum_{j =1}^J | \mathbb{P} (A_i \cap B_j ) - \mathbb{P}(A_i) \mathbb{P}(B_j)|, 
\end{equation}
where the supremum is taken over all pairs of finite partitions $\{A_1, \ldots, A_I\}$ and $\{B_1, \ldots, B_J\}$ of $\Omega$ such that $A_i \in \mathcal{F}_1$, for all $i \in \{ 1, \ldots, I\}$, $I \geq 1$ and $B_j \in \mathcal{F}_2$ for all $j \in \{1, \ldots, J\}$, $J \geq 1$.
\end{defn}

\noindent In our case the above notion of $\beta-$mixing coefficient is adopted for the $\sigma-$fields generated by {\it two} sequences. Moreover, by~\cite[Proposition 3.21]{Br}, for a fixed $n \geq 1$, and since $X^{(n)} = (X_1, \ldots, X_n)$ and $Y^{(n)} = (Y_1, \ldots, Y_n)$ are discrete random vectors,
\begin{align}
 \notag \beta(n) \coloneqq & \quad  \beta\left(\sigma\left(X^{(n)}\right), \sigma\left(Y^{(n)}\right)\right) \\
\label{eq:beta_m} = & \quad  \frac{1}{2} \sum_{u \in \mathcal{A}^n} \sum_{v \in \mathcal{A}^n}  \left| \mathbb{P} \left(X^{(n)} = u, Y^{(n)} = v \right) - \mathbb{P}\left(X^{(n)} = u\right) \mathbb{P}\left(Y^{(n)} = v\right)\right|, 
\end{align}

\noindent  where $\sigma\left(X^{(n)}\right)$ and $\sigma\left(Y^{(n)}\right)$ are the $\sigma-$fields generated by $X^{(n)}$ and $Y^{(n)}$. Clearly $X^{(n)}$ and $Y^{(n)}$ are independent if and only if $\beta(n) = 0$. Further, set $\beta^* \coloneqq \lim_{n \to \infty} \beta(n)$, where the limit exists since $\beta(n)$ is non-decreasing, in $n$,  and $\beta(n) \in [0,1]$ (see Section 5 in~\cite{Br}).

\begin{rem}(i) Another definition of $\beta-$mixing coefficient  based on ``past" and ``future" is often studied in the literature, see, for instance,~\cite[Section 2]{B_s}. For  a single sequence of random variables $S = (S_k)_{k \in \mathbb{Z}}$  and for $- \infty \leq J \leq L \leq \infty$, let
\begin{align}
\nonumber \mathcal{F}_J^L \coloneqq \sigma(S_k , J \leq k \leq L ),
\end{align}

\noindent and for each $n \geq 1$, let 
\begin{align}
\nonumber \beta_n \coloneqq \sup_{j \in \mathbb{Z} } \beta( \mathcal{F}_{- \infty}^j, \mathcal{F}_{j + n}^{\infty}).
\end{align}

\noindent In particular~\cite[Theorem 3.2]{B_s} implies that if $S$ is a strictly stationary, finite-state Markov chain that is also irreducible and aperiodic, $\beta_n \to 0$ as $n \to \infty$. The mixing definition relevant to our approach is different and this limiting behavior does not follow. A further discussion of the  values of $\beta(n)$ is included in Remark~\ref{rem:final} (i).

\noindent (ii) One might also be interested to use the $\alpha-$mixing coefficient defined for $\sigma-$ fields $\mathcal{S}$ and $\mathcal{T}$ as:
\begin{align}
\nonumber \alpha(\mathcal{S} , \mathcal{T}) = 2 \sup \{ | Cov( \mathbf{1}_S, \mathbf{1}_T ) |: (S, T) \in \mathcal{S} \times \mathcal{T} \}
\end{align}

\noindent Suppose further that $\mathcal{T}$ has exactly $N$ atoms. The following holds (see~\cite{B_s} and~\cite[Theorem 1]{B_a}):
\begin{align}
\nonumber 2 \alpha(\mathcal{S}, \mathcal{T}) \leq  \beta(\mathcal{S}, \mathcal{T}) \leq (8N)^{1/2} \alpha(\mathcal{S}, \mathcal{T}).
\end{align}

\noindent However, for our setting the number of atoms $N$ will be $|\mathcal{A}|^n$, and since $\alpha(n) \coloneqq \alpha(\sigma(X^{(n)}) , \sigma(Y^{(n)}) )$ is increasing, a bound on $\beta(n)$ using the inequality above is useless.

\end{rem}

The following rate of convergence is our main result:


\begin{thm}\label{rate_main} Let $(Z, (X, Y))$ be a hidden Markov model, where the sequence $Z$ is an aperiodic time homogeneous and irreducible Markov chain with  finite state space $\mathcal{S}$. Let the distribution of the pairs $(X_i, Y_i)$,  $i = 1, 2, 3, \ldots$, be symmetric for all states in $Z$. Then, for all $n \geq 2$,
\begin{equation}\label{rate_bound_l}
 \frac{\mathbb{E}[LC_n]}{n} \geq \gamma^* - 2\beta^* -  C \sqrt{\frac{\ln n}{n} } - \frac{2}{n}  - (1 - \mathbf{1}_{\mu = \pi}) \left( \frac{1}{\sqrt{n}}+  c\alpha^{\sqrt{n}}   \right) , 
\end{equation}

\noindent where $\alpha \in (0,1), c > 0$ are constants as in~\eqref{eq:doeb} and $C > 0$. All constants  depend on the parameters of the model but not on $n$. Moreover with the same $\alpha$ and $c$, 
\begin{align}
 \label{rate_bound_u}\frac{\mathbb{E}[LC_n]}{n} \leq \gamma^*  + (1 - \mathbf{1}_{\mu = \pi}) \left( \frac{1}{\sqrt{n}}+ c \alpha^{\sqrt{n}}  \right).
\end{align}

\end{thm}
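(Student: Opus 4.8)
The plan is to prove the upper bound \eqref{rate_bound_u} first, then to reduce the lower bound \eqref{rate_bound_l} to the stationary case, and finally to run an Alexander-type argument on a decoupled model extracted from the $\beta$-mixing hypothesis. For the upper bound, when $\mu = \pi$ the pair $(X,Y)$ is stationary and superadditive, so Fekete's lemma \eqref{eq:mean} gives $\mathbb{E}[LC_n]/n \leq \sup_k \mathbb{E}[LC_k]/k = \gamma^*$, which is \eqref{rate_bound_u} with the indicator term absent. When $\mu \neq \pi$ I would reuse the coupling $(Z, \overline Z)$ of Proposition~\ref{prop:mean}: taking $K = \sqrt n$ in \eqref{eq:mean_coupling} and inserting the Doeblin estimate \eqref{eq:doeb} yields $|\mathbb{E}[LC_n] - \mathbb{E}[\overline{LC}_n]| \leq n c\alpha^{\sqrt n} + \sqrt n$, and dividing by $n$ together with the stationary bound $\mathbb{E}[\overline{LC}_n]/n \leq \gamma^*$ gives \eqref{rate_bound_u}. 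Applied in the other direction, the same coupling shows that it suffices to establish \eqref{rate_bound_l} in the stationary case, the error incurred being exactly $(1-\mathbf 1_{\mu=\pi})(1/\sqrt n + c\alpha^{\sqrt n})$.

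Assume henceforth $\mu = \pi$. The role of the $\beta$-mixing coefficient is to let me replace the genuinely dependent pair $(X, Y)$ by a decoupled pair $(X, \widetilde Y)$ in which $\widetilde Y$ has the same law as $Y$ but is independent of $X$; write $\widetilde{LC}_n \coloneqq LCS(X^{(n)}; \widetilde Y^{(n)})$ and $\widetilde\gamma_n \coloneqq \mathbb{E}[\widetilde{LC}_n]/n$. Since $0 \leq LC_n \leq n$ and, by \eqref{eq:beta_m}, the total variation distance between the joint law of $(X^{(n)}, Y^{(n)})$ and the product of its marginals equals $\beta(n)$, a centering argument gives $|\mathbb{E}[LC_n] - \mathbb{E}[\widetilde{LC}_n]| \leq n\beta(n)$, hence $\gamma_n \geq \widetilde\gamma_n - \beta^*$ and, in the limit, $\widetilde\gamma^* \geq \gamma^* - \beta^*$, where $\widetilde\gamma^* = \lim_n \widetilde\gamma_n$ exists by Remark~\ref{rem:first}(ii) since $(X,\widetilde Y)$ consists of two independent identical hidden Markov chains. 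Combining these two inequalities, the proof of \eqref{rate_bound_l} reduces to the Alexander-type estimate $\widetilde\gamma_n \geq \widetilde\gamma^* - C\sqrt{\ln n / n} - 2/n$ for the decoupled model, and it is these two $\beta^*$-losses that produce the coefficient $2\beta^*$.

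The decisive gain from decoupling is that, with $X \perp \widetilde Y$ and each sequence stationary, the law of $LCS(X_{[a,b]}; \widetilde Y_{[c,d]})$ depends only on the two lengths $b-a$ and $d-c$ and not on the relative shift $c-a$ --- a property that fails for the diagonally coupled pair $(X,Y)$ and which is exactly what Alexander's block argument needs. I would first establish a Hoeffding/Azuma concentration inequality $\mathbb{P}(|\widetilde{LC}(W) - \mathbb{E}[\widetilde{LC}(W)]| \geq t) \leq 2\exp(-t^2/(Cn))$, valid uniformly over windows $W$ of $X$-length at most $n$; here the bounded-difference constant is controlled because altering a single hidden state $Z_i$ changes the LCS by at most $2$, while the geometric mixing coming from \eqref{eq:doeb} supplies the variance-proxy bound for the non-iid hidden chains. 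Then, following Alexander, the optimal alignment realizing $\widetilde{LC}_N$ for $N$ a large multiple of $n$ is cut into at most $N/n$ consecutive blocks on the $X$-axis with matching $\widetilde Y$-windows tiling $[1,N]$; a union bound over the $O(N^2)$ possible window positions, combined with the concentration inequality at level $t \asymp \sqrt{n\ln N}$, bounds every block LCS simultaneously and yields $\mathbb{E}[\widetilde{LC}_N]/N \leq \widetilde\gamma_n + C\sqrt{\ln n/n} + 2/n$. Taking the supremum over $N$ turns the left side into $\widetilde\gamma^*$ and produces the required estimate.

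I expect the main obstacle to be this last block decomposition. The windows produced by the optimal alignment have \emph{variable} lengths on the $\widetilde Y$-side, so the union bound must range over all admissible rectangles and the concentration inequality must hold uniformly in the window, which is precisely where the logarithmic factor --- and hence the $\sqrt{\ln n / n}$ rate --- is generated; keeping the constant $C$ independent of $N$ while summing the at most $N/n$ block contributions and absorbing the boundary remainder block (the source of the $2/n$) requires care. A secondary difficulty is proving the Hoeffding-type inequality itself for the two independent but internally Markovian hidden chains, for which the geometric contraction \eqref{eq:doeb} must be converted into a genuine sub-Gaussian bound via, for instance, a martingale-difference or Marton-coupling argument.
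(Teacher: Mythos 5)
Your global-decoupling reduction is genuinely different from the paper's argument, and the reduction itself is sound. The paper never compares $(X,Y)$ with an independent pair at the level of the limiting constants: it runs the partition argument of \cite{LMT} on the dependent process itself, applies the Hoeffding bound \eqref{hoeff} to the actual chain $(Z_i,X_i,Y_i)_{i\ge 1}$, and uses independence only locally, through Berbee's coupling \eqref{thm:beta}, inside the single-block estimate \eqref{eq:exchange}; the three substitutions $Y\leftrightarrow Y^*$ made there each cost $n\beta(kn)$ and accumulate to the $2\beta^*$ of \eqref{rate_bound_l} after $k\to\infty$. You instead decouple once and for all at the level of expectations, which needs only the total-variation identity \eqref{eq:beta_m} (centering gives $|\mathbb{E}[LC_n]-\mathbb{E}[\widetilde{LC}_n]|\le n\beta(n)$, no Berbee), and your two transfers $\gamma_n\ge\widetilde\gamma_n-\beta^*$ and $\widetilde\gamma^*\ge\gamma^*-\beta^*$ account for the same constant $2\beta^*$. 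What remains is a rate of convergence for two independent, identically distributed (this is exactly where symmetry enters) stationary HMMs, which is the content of the paper's Corollary~\ref{cor_indep}, with the concentration inequality supplied by Remark~\ref{Hoeff_HMM}(i). Your organization is more modular, and it has the added merit that the coefficient it consumes is precisely the $\beta(n)$ of \eqref{eq:beta_m}, between $X^{(n)}$ and $Y^{(n)}$, whereas the paper's proof invokes Berbee for the a priori larger coefficient $\beta(\sigma((Z,X)^{(n)}),\sigma(Y^{(n)}))$.

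The gap lies in your proposed proof of that remaining independent-model estimate, in two places. First, blocks with fixed $X$-length $n$ and unconstrained $\widetilde Y$-length $m_i$ do not admit the mean bound you need: exchangeability and superadditivity give only $\mathbb{E}[LCS(X_1,\ldots,X_n;\widetilde Y_1,\ldots,\widetilde Y_{m_i})]\le \frac{1}{2}\mathbb{E}[\widetilde{LC}_{n+m_i}]$, and when $m_i\gg n$ the right-hand side can only be estimated by $\frac{n+m_i}{2}\widetilde\gamma^*$, so the block sums collapse to the vacuous $\mathbb{E}[\widetilde{LC}_N]\le N\widetilde\gamma^*+\ldots$; one must take blocks of fixed \emph{perimeter}, as in \eqref{def:partition}, so that every block mean is at most $\frac{1}{2}\mathbb{E}[\widetilde{LC}_{2n}]$. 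Second, and more seriously, your union bound operates at the wrong scale: per-window concentration has variance proxy $n$, so a union bound over the polynomially many windows forces $t\asymp\sqrt{n\ln N}$, and summing over the roughly $N/n$ blocks gives a per-letter error $\asymp\sqrt{\ln N/n}$, not $\sqrt{\ln n/n}$; this diverges exactly when you let $N\to\infty$ to reach $\widetilde\gamma^*$, and your displayed conclusion has silently replaced $\ln N$ by $\ln n$. The two standard repairs are: (a) the paper's mechanism, i.e., apply Lemma~\ref{lem:hoeff} to the whole partition sum at scale $N$ and union-bound over the $\exp(O((N/n)\ln n))$ partitions, so that the entropy and the deviation exponent are both linear in $N/n$ and $N$ cancels; or (b) a multi-scale bootstrap, comparing scale $N_{j+1}$ to scale $N_j$ along a rapidly growing sequence so that the errors $\sqrt{\ln N_{j+1}/N_j}$ are summable and of total order $\sqrt{\ln n/n}$. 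With either correction, your reduction does yield \eqref{rate_bound_l}.
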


\medskip

\noindent A key ingredient in proving Theorem~\ref{rate_main} is a Hoeffding-type inequality for Markov chains, a particular case of a result due to Paulin~\cite{DP}, which is now recalled.  It relies on the  mixing time $\tau(\ep)$ of the Markov chain $Z$ given by
\begin{equation}
\notag \tau(\ep) \coloneqq \min \{t \in \mathbb{N} : \overline{d}_Z(t) \leq \ep \},
\end{equation} 

\noindent where
\begin{equation}
\notag \overline{d}_Z(t) \coloneqq \max_{1 \leq i \leq N - t} \sup_{x, y \in \Lambda_i} d_{TV} ( \mathcal{L}  ( Z_{i+t}| Z_i = x ) ,  \mathcal{L} ( Z_{i+t} | Z_i = y)),
\end{equation}

\noindent and where $d_{TV}(\mu, \nu) = \frac{1}{2}\sum_{x \in \Omega} | \mu(x) - \nu(x) |$ is the total variation distance between the two probability measures $\mu$ and $\nu$ on the finite set $\Omega$.

\begin{lem}\label{lem:hoeff} Let $M \coloneqq (M_1, \ldots , M_N)$ be a (not necessarily time homogeneous) Markov chain, taking values in a Polish space $\Lambda = \Lambda_1 \times \cdots \times \Lambda_N$, with mixing time $\tau(\ep)$,  $0 \leq \ep \leq 1$. Let 
\begin{equation} 
\notag \tau_{min} \coloneqq \inf_{0 \leq \ep < 1} \tau(\ep)  \left(\frac{2 - \ep}{1 - \ep}\right)^2, 
\end{equation}

\noindent and let  $f : \Lambda \to \mathbb{R}$ be such that there is $c \in \mathbb{R}_+^N$ with  $|f(u) - f(v)| \leq \sum_{i = 1}^N c_i \mathbf{1}_{u_i \neq v_i}$  . Then for any $t \geq 0$,
\begin{equation}\label{eq:conc}
\mathbb{P}(f(M) - \mathbb{E} f(M) \geq t) \leq  \exp \left(\frac{-2t^2}{ \tau_{min} \sum_{i = 1}^N c_i^2}\right). 
\end{equation}
 
\end{lem}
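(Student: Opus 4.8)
Since \eqref{eq:conc} is a specialization of Paulin's general McDiarmid-type inequality~\cite{DP}, the plan is to reduce the statement to that result and, for self-containedness, to indicate the mechanism behind it. The natural route is the martingale method: setting $\mathcal{F}_i \coloneqq \sigma(M_1, \ldots, M_i)$ and $D_i \coloneqq \mathbb{E}[f(M) \mid \mathcal{F}_i] - \mathbb{E}[f(M) \mid \mathcal{F}_{i-1}]$, one writes $f(M) - \mathbb{E} f(M) = \sum_{i=1}^N D_i$ as a sum of martingale differences and aims to apply an Azuma--Hoeffding bound. In the independent case the hypothesis $|f(u) - f(v)| \leq \sum_i c_i \mathbf{1}_{u_i \neq v_i}$ immediately yields $|D_i| \leq c_i$ and the classical constant; the whole difficulty is that, for a Markov chain, altering the $i$-th coordinate propagates to all later coordinates.

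First I would control a single martingale difference by coupling. Fixing the past $M_1, \ldots, M_{i-1}$, I compare the conditional law of $f(M)$ under $M_i = m$ and under $M_i = m'$. By the Markov property these two conditional laws differ only through the future coordinates $M_{i+1}, \ldots, M_N$, so coupling the two conditional future paths and invoking the bounded-differences hypothesis gives a bound of the form $c_i + \sum_{j > i} c_j\, \mathbb{P}(M_j \neq M_j')$, where $(M_j')$ denotes the coupled copy. The disagreement probabilities are precisely what the mixing time governs: by the submultiplicativity $\overline{d}_Z(s + t) \leq \overline{d}_Z(s)\, \overline{d}_Z(t)$ and the definition of $\tau(\ep)$ one has $\overline{d}_Z(k\,\tau(\ep)) \leq \ep^{\,k}$, so summing the resulting geometric series over windows of length $\tau(\ep)$ produces a factor of order $\tau(\ep)/(1 - \ep)$ per coordinate.

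These per-coordinate estimates are then packaged into a mixing matrix $\Gamma$, and an Azuma--Hoeffding step converts the control of the martingale differences into the sub-Gaussian tail \eqref{eq:conc}, the infimum over the window parameter $\ep \in [0,1)$ that defines $\tau_{min}$ accounting for the factor $\bigl(\tfrac{2 - \ep}{1 - \ep}\bigr)^2$. The main obstacle here is quantitative rather than structural: a naive combination of the martingale bound with the crude coupling above yields a variance proxy of order $\tau(\ep)^2 \sum_i c_i^2$, whereas \eqref{eq:conc} asserts the sharp \emph{first} power $\tau_{min} \sum_i c_i^2$. Recovering this optimal dependence on the mixing time --- matching what the variance alone would predict --- is exactly the delicate part of Paulin's argument, resting on a careful Marton coupling together with a sharp operator-norm estimate for $\Gamma$, and it is this sharp form that we invoke.
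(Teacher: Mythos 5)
Your proposal takes essentially the same route as the paper: there Lemma~\ref{lem:hoeff} is simply recalled as a particular case of Paulin's concentration inequality~\cite{DP} without proof, and you likewise reduce the statement to that result, correctly conceding that the sharp first-power dependence on $\tau_{min}$ is Paulin's contribution and is invoked rather than rederived. Your accompanying sketch of the Marton-coupling/martingale mechanism is a faithful summary of how that result is obtained, so nothing further is needed.
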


\noindent For our purposes,  the Hoeffding-type inequality used below follows directly from~\eqref{eq:conc}  once one notes that $(Z_i, X_i, Y_i)_{i \geq 1}$ is jointly a Markov chain on a bigger state space. Let $\tau(\ep)$ be the mixing time of this chain. Taking $f$ to be the length of the longest common subsequences of $X_1, \ldots, X_n$ and $Y_1, \ldots, Y_n$ we have $c = ((0, \ldots, 0), (1, \ldots, 1)) \in \mathbb{R}^{n} \times \mathbb{R}^{2n}$, since $f$ is a function of $Z$, $X$ and $Y$, whose values do not depend  on $Z$.  Letting $A \coloneqq \sqrt{\tau_{min}/ 2 }$, ~\eqref{eq:conc} becomes,

\begin{equation}\label{hoeff}
 \mathbb{P}( LC_n - \mathbb{E}[LC_n]  \geq t) \leq  \exp \left[ \frac{-t^2 }{A^2 n} \right],
\end{equation}

\noindent for all $t \geq 0$.





\begin{rem}\label{Hoeff_HMM}(i) When $X$ and $Y$ are generated by two independent hidden chains $Z^X$ and $Z^Y$, the same reasoning yields~\eqref{hoeff} where now $\tilde{\tau}(\ep)$ is the mixing time of the chain $(Z^X_n, Z^Y_n, X_n, Y_n)_{n \geq 1}$.  

\noindent (ii)  The mixing time $\tau(\ep)$ of $(Z_n, X_n, Y_n)_{n \geq 1}$ is the same as  the mixing time $\tilde{\tau}(\ep)$ of the chain $(Z_n)_{n \geq 1}$. Two proofs of this fact are provided in the Appendix. 
\end{rem}

\begin{proof}[Proof of Theorem \ref{rate_main}]\label{rate_1}

  First recall  a  result  of Berbee~\cite{B}, see also~\cite[Theorem 1, Section 1.2.1]{PD},~\cite[Chapter 5]{ER}, and~\cite{G}, asserting that on our probability space, which is rich enough, there exists $Y^{*(n)} \coloneqq (Y_1^*, \ldots, Y_n^*)$, independent of $(Z , X)^{(n)} = ((Z_1, X_1), \ldots , (Z_n, X_n))$, having the same law as $Y^{(n)} = (Y_1, \ldots, Y_n)$ and such that 
\begin{equation}\label{thm:beta}
 \mathbb{P}( Y ^{(n)}\neq Y^{*(n)}) = \beta(n),
\end{equation}

\noindent  where $\beta(n) = \beta(\sigma((Z, X)^{(n)}), \sigma(Y^{(n)}))$ is the $\beta-$mixing coefficient of $(Z, X)^{(n)}$ and $Y^{(n)}$.  Note also that if  $ (Y_i)_{i \geq 1}$ is stationary, then $(Y_1^*, \ldots, Y_k^*)$ and $(Y_{\ell}^*, \ldots, Y_{\ell+k-1}^*)$ are identically distributed, for every $\ell, k \geq 1$,   and that  if $(X^{(n)}, Y^{(n)})$ is symmetric, then so is $(X^{(n)}, Y^{*(n)})$ where $X^{(n)} = (X_1, \ldots, X_n)$. Note finally that this implies that $Y^{*(n)}$ is independent of both $X^{(n)}$ and $Z^{(n)} = (Z_1, \ldots, Z_n)$.

\noindent Next, fix $k \in \mathbb{N}$, the idea of the proof is to relate $\mathbb{E}[LC_{kn}]$ to $\mathbb{E}[LC_{2n}]$. For $k = 4$, this is done in the i.i.d case in ~\cite{R}. However, we wish to take $k \to \infty$ and therefore follow  arguments presented for the i.i.d case  in~\cite{LMT}. Call $(\nu, \tau) \coloneqq (\nu_1, \ldots, \nu_r, \tau_1, \ldots \tau_r)$  an $r-$ partition with $k \leq r \leq \lceil 2kn/(2n -1) \rceil$  if 
\begin{equation}\label{def:partition}
\begin{split}
&1  = \nu_1 \leq \nu_2 \leq \cdots \leq \nu_{r + 1} = kn + 1, \\
&1 = \tau_1 \leq \tau_2 \leq \cdots \leq \tau_{r + 1} = kn + 1,\\ 
&(\nu_{j+1} - \nu_j) + (\tau_{j+1} - \tau_j) \in \{(2n-1, 2n\}, \text{ for  } j \in [1, r-1], \\
&(\nu_{r+1} - \nu_{r }) + (\tau_{r+1} - \tau_{r} ) < 2n. 
\end{split}
\end{equation}

\noindent Let $\mathcal{B}_{k,n}^r$ be the set of all $r-$ partitions defined as above and let
\begin{equation}
\notag \mathcal{B}_{k, n} = \bigcup_{r = k}^{\lceil 2kn / (2n-1) \rceil} \mathcal{B}_{k, n}^r.
\end{equation}

\noindent If $(\nu, \tau)$ is an $r-$partition, setting 
\begin{equation}
\notag LC_{kn} (\nu, \tau) \coloneqq \sum_{i = 1}^r LCS (X_{\nu_i} , \ldots, X_{\nu_{i+1} - 1}; Y_{\tau_i} , \ldots, Y_{\tau_{i+1} - 1}),
\end{equation}
\noindent then:
\begin{equation}
\notag LC_{kn} = \max_{(\nu, \tau) \in \mathcal{B}(k, n)} LC_{kn}(\nu, \tau).
\end{equation}

\noindent Let $\nu_{i+1} - \nu_i = n-m$, $\tau_{i+1} - \tau_i \leq n+m$ for $m \in (-n, n)$ and $\tau_i - \nu_i = \ell$. Then,
\begin{align}
\notag & \mathbb{E}[LCS (X_{\nu_i} , \ldots, X_{\nu_{i+1} - 1}; Y_{\tau_i} , \ldots, Y_{\tau_{i+1} - 1})] \\
\label{eq:1}& =  \quad \mathbb{E}[LCS(X_1, \ldots , X_{n-m}; Y_{\ell} , \ldots, Y_{\ell+n+m-1})] \\
\notag & \leq \quad \mathbb{E}\left[LCS(X_1, \ldots , X_{n-m}; Y_{\ell}^* , \ldots, Y_{\ell+n+m-1}^*) \mathbf{1}_{Y^{(kn)} = Y^{*(kn)}}\right]  \\
\label{eq:1a}  & \quad \quad \quad \quad \quad \quad \quad  + \min (n-m, n+m) \mathbb{P}\left(Y^{(kn)} \neq Y^{*(kn)}\right	) \\
\label{eq:2} & \leq   \quad \mathbb{E}[LCS(X_1, \ldots , X_{n-m}; Y_{\ell}^* , \ldots, Y_{\ell+n+m-1}^*)] + n \beta(kn).  
\end{align}

\noindent  In the last expression the LCS is now a function of two independent sequences. Stationarity implies~\eqref{eq:1} and $LCS(X_1, \ldots, X_{n-m};  Y_{\ell}^* , \ldots, Y_{\ell+n+m-1}^*) \leq \min (n-m, n+m)$ entails~\eqref{eq:1a}. The error term $n \beta(kn)$ in~\eqref{eq:2} follows from an application of Berbee's result~\eqref{thm:beta}. The same properties also imply 
\begin{align}
\notag & \mathbb{E}[LCS(X_1, \ldots , X_{n-m}; Y_{\ell}^* , \ldots, Y_{\ell+n+m-1}^*)] \\
\notag & = \quad \mathbb{E}[LCS(X_1, \ldots , X_{n-m}; Y_1^* , \ldots, Y_{n+m}^*)] \\
\label{eq:3} & \leq \quad \mathbb{E}[LCS(X_1, \ldots , X_{n-m}; Y_1 , \ldots, Y_{n+m})] + n\beta(kn),
\end{align}

\noindent and
\begin{align}
\notag & \mathbb{E}[LCS(X_1, \ldots , X_{n-m}; Y_{\ell}^* , \ldots, Y_{\ell+n+m-1}^*)] \\
\label{eq:4} & = \quad \mathbb{E}[LCS(X_1, \ldots , X_{n+m}; Y_1^* , \ldots, Y_{n-m}^*)] \\
\label{eq:5} & \leq \quad \mathbb{E}[LCS(X_{n-m+1}, \ldots , X_{2n}; Y_{n+m+1} , \ldots, Y_{2n})] +n \beta(kn),
\end{align}

\noindent  where the symmetry of the distributions of $X$ and $Y^*$ is used to get~\eqref{eq:4}. Next by superadditivity of the LCSs as well as~\eqref{eq:2},~\eqref{eq:3} and~\eqref{eq:5}, 
\begin{align}
\notag \mathbb{E} & [ LCS (X_{\nu_i} , \ldots, X_{\nu_{i+1} - 1}; Y_{\tau_i} , \ldots, Y_{\tau_{i+1} - 1})]  \\
\notag  \leq & \quad \frac{1}{2} \bigg( \mathbb{E}[LCS(X_1, \ldots , X_{n-m}; Y_1 , \ldots, Y_{n+m})]  \\
\notag & \quad \quad + \mathbb{E}[LCS(X_{n-m+1}, \ldots , X_{2n}; Y_{n+m+1} , \ldots, Y_{2n})] + 2 n \beta(kn) \bigg) + n \beta(kn) \\
\notag  \leq & \quad  \frac{1}{2} \bigg( \mathbb{E}[LC_{2n}] + 2 n \beta(kn) \bigg) + n \beta(kn)\\ 
\label{eq:exchange} = & \quad \frac{1}{2} \mathbb{E}[LC_{2n}] + 2 n \beta(kn).
\end{align}

\noindent This inequality is key  to the proof, since it yields an upper bound on $\mathbb{E}[LC_{kn}(\nu, \tau)]$ in terms of $\mathbb{E}[LC_{2n}]$, a quantity that does not depend on the partitioning $(\nu, \tau)$. A similar result is central to the proof of the rate of convergence in the independent setting~\cite{A}. However, independence allows one to get~\eqref{eq:exchange} directly without the mere presence of or the need to introduce $\beta$-mixing coefficients. Moreover, our approach is more direct. Applying  Hoeffding's inequality and summing over all partitions provide a relation between $\mathbb{E}[LC_{kn}]$ and $\mathbb{E}[LC_{2n}]$ which can be used to get the rate of convergence. Indeed,
\begin{equation}
\notag \mathbb{E}[LC_{kn} (\nu, \tau)] \leq \frac{r}{2} (\mathbb{E}[LC_{2n}] + 4n \beta(kn) ) \leq \frac{1}{2}\left\lceil \frac{2kn}{ 2n-1 }\right\rceil (\mathbb{E}[LC_{2n}] + 4n \beta(kn) ).
\end{equation}

\noindent In addition, for $t > 0$,
\begin{align}
\notag \mathbb{P}& \left(LC_{kn}(\nu, \tau) - \frac{1}{2} \left\lceil \frac{2kn}{ 2n-1 }\right\rceil \left( \mathbb{E}[LC_{2n}] + 4n \beta(kn)\right) > t kn \right) \\
\notag & \leq  \quad \mathbb{P}\left(LC_{kn}(\nu, \tau) - \mathbb{E}[LC_{kn} (\nu, \tau)]  > t kn \right) \\
 & \leq  \quad  \exp \left[ - \frac{t^2 k n } {A^2} \right], 
\end{align}

\noindent where the second inequality follows from Lemma~\ref{lem:hoeff}.  Next note that:
\begin{align}
\notag \mathbb{P} & \left(LC_{kn}  - \frac{1}{2} \left\lceil \frac{2kn}{ 2n-1 }\right\rceil \left( \mathbb{E}[LC_{2n}] + 4n \beta(kn)\right) > t kn \right) \\ 
\notag & =  \sum_{(\nu, \tau) \in \mathcal{B}_{k, n} } \mathbb{P}\left(LC_{kn}(\nu, \tau) - \frac{1}{2} \left\lceil \frac{2kn}{ 2n-1 }\right\rceil \left( \mathbb{E}[LC_{2n}] + 4n \beta(kn)\right) > t kn \right)  \\ 
\notag & \leq  |\mathcal{B}_{k, n} |  \exp \left[ - \frac{t^2 k n } {A^2} \right].
\end{align}

\noindent The above can be rewritten as: 
\begin{equation}
\notag \mathbb{P}\left(\frac{LC_{kn} }{kn} > t + \frac{1}{k} \left\lceil \frac{2kn}{ 2n-1 }\right\rceil \left( \frac{\mathbb{E}[LC_{2n}]}{2n} + 2 \beta(kn)\right)  \right) \leq    |\mathcal{B}_{k, n} | \exp \left[ - \frac{t^2 k n } {A^2} \right].
\end{equation}

\noindent  Then, since $LC_{kn}\leq kn$,
\begin{align}
\notag \mathbb{E} \left[\frac{LC_{kn} }{kn}\right] \leq  &\quad  t + \frac{1}{k} \left\lceil \frac{2kn}{ 2n-1 }\right\rceil \left(\frac{\mathbb{E}[LC_{2n}]}{2n} + 2 \beta(kn) \right)  \\
\notag & \quad \quad \quad \quad + \mathbb{P}\left(\frac{LC_{kn} }{kn} > t + \frac{1}{k} \left\lceil \frac{2kn}{ 2n-1 }\right\rceil \frac{\mathbb{E}[LC_{2n}]}{2n}   \right) \\
\label{eq:hoeff} \leq & \quad t + \frac{1}{k} \left\lceil \frac{2kn}{ 2n-1 }\right\rceil \left( \frac{\mathbb{E}[LC_{2n}]}{2n} + 2 \beta(kn)\right)  + |\mathcal{B}_{k, n} | \exp \left[ - \frac{t^2 k n } {A^2} \right].
\end{align}

\noindent Next a bound on $|\mathcal{B}_{k, n}|$ is obtained using  methods as in~\cite{LMT}. Recall that $k \leq r \leq \lceil 2kn/(2n-1) \rceil$ and that $\mathcal{B}_{k, n} = \bigcup_{r = k}^{\nicefrac{2kn}{2n-1}} \mathcal{B}_{k, n}^r$. Now 
\begin{equation}\label{B_knr:bound}
 | \mathcal{B}_{k, n}^r|   \leq 2^{r-1} 2n \binom{nk + r -1}{r- 1}.
\end{equation}
  
\noindent Indeed, the sum of sizes of the partition on the $X$ side should sum to $nk$ which gives a factor of less than $\binom{nk + r - 1}{ r-1}$. Also for each choice of the first $r-1$ elements of the partition on the $X$ side we have at most $2$ choices on the $Y$ side. The last interval can take at most $2n$ values, as per ~\eqref{def:partition}. Recall Stirling's formula (see~\cite{F}), for $n \geq 1$, 
\begin{equation}
\notag n^n e^{-n} \sqrt{2 \pi n } e^{\nicefrac{1}{(12n+1)}} \leq n! \leq n^n e^{-n} \sqrt{2 \pi n } e^{\nicefrac{1}{12n}}. 
\end{equation}

\noindent Since in the end of the proof $k \to \infty$, this bound can be used in ~\eqref{B_knr:bound} to obtain:
\begin{equation}
 \begin{split}
\notag  |\mathcal{B}_{k,n}^r| & \leq (2^{r-1} 2n) \frac{(nk + r -1)^{nk + r -1 } \sqrt{2\pi (nk + r - 1)} e^{\nicefrac{1}{12(nk + r - 1)}}}{(r-1)^(r-1) \sqrt{2\pi (r-1)} e^{\nicefrac{1}{12(r-1)+1}} (nk)^{nk} \sqrt{2 \pi nk} e^{\nicefrac{1}{12(nk) + 1}}} \\
\notag & \leq 2^r n \frac{(nk+r-1)^{nk + r -1}}{(r-1)^{r-1} (nk)^{nk}}\\
\notag & \leq 2^r n \left(1 + \frac{nk}{r-1}\right)^{r-1} \left(1 + \frac{2}{2n-1}\right)^{nk} \\
\notag & \leq 2^r n \left(1 + n + \frac{n}{k-1}\right)^{\frac{2nk}{2n-1}} \left(\frac{2n+1}{2n-1}\right)^{nk}.
 \end{split}
\end{equation}

\noindent The last inequality in the above  expression  holds true since $k \leq r \leq \lceil 2kn/(2n-1) \rceil$. Then for $|\mathcal{B}_{k,n}|$ one gets:
\begin{equation}
\begin{split}
\notag |\mathcal{B}_{k, n}| & \leq \left(\frac{2nk}{2n-1} - k + 2\right) \max_r |\mathcal{B}_{k,n}^r| \\
\notag & \leq \left( \frac{k}{2n-1} + 2 \right) 2^r n \left(1 + n + \frac{n}{k-1}\right)^{\frac{2nk}{2n-1}} \left(\frac{2n+1}{2n-1}\right)^{nk}\\
\notag & \leq \exp \left( \left(\frac{ \ln \left( \frac{k}{2n-1} + 2 \right) }{nk} + \frac{ r \ln 2 + \ln n }{nk} + \frac{2}{2n-1} \ln (2n)  + \ln \left(\frac{2n+1}{2n-1} \right)  \right) nk \right) \\
\notag & \leq \exp \left( \left( \frac{\ln k}{k} + \frac{2}{2n-1} \ln 2   + \frac{2}{2n-1} \ln \left(2n \right) + \ln \left(\frac{2n+1}{2n-1} \right) \right) nk \right) \\
\notag & \leq \exp \left( \left( \frac{\ln k}{k} + \frac{4}{2n-1} \ln 2 + \frac{2}{2n-1}\ln n + \ln \left(\frac{2n+1}{2n-1} \right)  \right) nk \right) \\
\notag & \leq \exp \left( 10 k \ln n\right),
\end{split}
\end{equation}

\noindent where the last inequality holds for large $k$, in particular $k > n$, and since $\ln  ( 1 + x ) \leq x$ for $x > 0$. Let $t = 2 A \sqrt{10} \sqrt{\ln n / n }$. Then, 
\begin{equation}
 \begin{split}
\notag  |\mathcal{B}_{k, n} |  \exp \left( - \frac{t^2 k n } {A^2} \right) & \leq \exp \left( 10 k \ln n\right) \exp \left( - \frac{t^2 k n } {A^2} \right) \\
\notag & \leq \exp (- 30 k \ln n).
 \end{split}
\end{equation}

\noindent   Next, note that, as $k \to \infty$,  $\mathbb{E} \left[ LC_{kn} / (kn) \right] \to \gamma^*$  and that 
\begin{equation}
\notag \frac{1}{k} \left\lceil \frac{2kn}{ 2n-1 }\right\rceil  \leq \frac{1}{k} \left( \frac{2kn}{2n-1} + 1 \right) \to \frac{2n}{2n-1}. 
\end{equation}

\noindent  Recall also that $\beta^* = \lim_{n \to \infty} \beta(n) = \lim_{k \to \infty} \beta(kn)$. Then~\eqref{eq:hoeff} implies:
\begin{equation}\label{eq:hoeff_mod}
\frac{2n}{2n-1} \left( \frac{\mathbb{E}[LC_{2n}] }{2n} + 2 \beta^*\right) \geq \gamma^* - 2 A \sqrt{10} \sqrt{\frac{\ln n }{ n} },
\end{equation}

\noindent  and finally:
\begin{align}
\notag \frac{\mathbb{E}[LC_{2n}] }{2n} & \geq \frac{2n-1}{2n}\left( \gamma^* - 2 A \sqrt{10} \sqrt{\frac{\ln n}{ n} }\right) - 2 \beta^*\\
& \geq \gamma^* - 2 \beta^* - 2 A \sqrt{10} \sqrt{\frac{\ln n}{n}}  - \frac{1}{2n}.
\end{align}

\noindent To get the result for words of odd length note that by~\eqref{eq:hoeff_mod},
\begin{align}
\notag \frac{\mathbb{E}[LC_{2n+1}]}{2n+1} & \geq \frac{\mathbb{E}[LC_{2n}]}{2n+1}\\
\notag & \geq \frac{2n-1}{2n+1} \left( \gamma^* - 2 A \sqrt{10} \sqrt{\frac{\ln n}{ n} }\right)  - \frac{2n}{2n+1} 2 \beta^*  \\
\notag & \geq \gamma^* -2 \beta^* -  2 A \sqrt{10} \sqrt{\frac{\ln n}{ n} } - \frac{2}{2n+1}.
\end{align}

\noindent Of course, these last bounds are only of interest, for $n$ large enough, if $\gamma^* > 2 \beta^*$. Otherwise, we get the trivial lower bound $0$ (see  Remark~\ref{rem:final} below).  One is then left with slightly modifying the constants to get~\eqref{rate_bound_l}. The extra term on the right hand side in~\eqref{rate_bound_l} accounts for the difference in initial distributions ~\eqref{eq:mean_coupling}.

\noindent The proof of the upper bound~\eqref{rate_bound_u}, where symmetry is not needed, follows by combining Fekete's lemma (see~\cite{St}) with~\eqref{eq:mean_coupling} and~\eqref{eq:doeb}.

\end{proof}

\begin{rem}\label{rem:final} (i) Recall that the $\beta-$mixing coefficient $\beta(n)$ is a measure on the dependency between $(X_1, \ldots, X_n)$ and $(Y_1, \ldots, Y_n)$. The bounds in Theorem~\ref{rate_main} rely on $\beta^* \coloneqq \lim_{n \to \infty} \beta(n)$ which somehow quantifies a weak dependency requirement and  $\beta^* \neq 0$ unless the sequences $X$ and $Y$ are independent. Note also that the lower bound in Theorem~\ref{rate_main} is meaningful only if $2\beta^* < \gamma^*$. Besides the independent case, there are instances for which this condition is satisfied. For example, let  $X$ and $Y$ be both Markov chains with $L$ states and with the same transition matrix $P$, where some rows of $P$ are equal to $(1, 1, 1, \ldots, 1) / L$, i.e., such that  there exists  a set of states $\mathcal{L}$ such that the transition probability  between each one of these states is uniform. Let the initial distribution of $X_1$ be $\mu$ with $\mu(x) = 0$ if $x \notin \mathcal{L}$ and assume that $Y_1 = X_1$. Then the sequence $\tilde{Y}$ defined, for all $n$, via $\tilde{Y}_i = Y_i$, for $i \geq 1$ while $Y_1$ is distributed according to $\mu$ will be such that $\tilde{Y}^{(n)}$ and $Y^{(n)}$ have the same distribution. Moreover for all $n$, $\tilde{Y}^{(n)}$ and $X^{(n)}$ will be independent and $\mathbb{P}(\tilde{Y}^{(n)} \neq Y^{(n)} ) \geq \beta(n)$, but $\mathbb{P}(\tilde{Y}^{(n)} \neq Y^{(n)} ) = \mathbb{P}(Y_1 \neq \tilde{Y}_1)$ which can be made as small as desired for a suitable choice of $\mu$. Thus the lower bound in Theorem~\ref{rate_main} holds and is meaningful.

(ii) There are instances when the lower bound in Theorem~\ref{rate_main} is vacuous. Such a case is when $X_i = Y_i$ for all $i \geq 1$ and the $X_i$ are independent and uniformly distributed  over the letters in $\mathcal{A}$. Then, it is clear that $\gamma^* = 1$ whereas one  shows that
\begin{align}
\notag \beta(n) = 1 - \frac{1}{|\mathcal{A}|^n},
\end{align} 
\noindent and so $\beta^* = 1$. In this case the lower bound in~\eqref{rate_bound_l} is a negative quantity.

(iii) Theorem~\ref{rate_main} continues to hold for Markov chains with a general state space $\Lambda$. Indeed, the Hoeffding inequality~\eqref{hoeff} is true when $\Lambda$ is a Polish space. The exponential decay~\eqref{eq:doeb} holds when $\Lambda$ is \emph{petite}, i.e., when there exist a positive integer $n_0$, $\ep > 0$ and a probability measure $\nu$ on $\Lambda$ such that $P^{n_0} (x, A) \geq \ep \nu(A)$, for every measurable $A$ and $x \in \Lambda$, and where $P^{n_0}(x, A)$ is the $n_0-$step transition law of the Markov chain (see~\cite[Theorem 8]{Ros}).

\end{rem}

\noindent  When $X$ and $Y$ are generated by independent hidden Markov models. Then the following variant of Theorem~\ref{rate_main} holds (for a sketch of  proof, see the Appendix).

\begin{cor}\label{cor_indep}  Let $(Z_X, X)$ and $(Z_Y, Y)$ be two independent  hidden Markov models, where the latent chains $Z_X$ and $Z_Y$ have the same initial distribution, transition matrix and emission probabilities.  Then, for all $n \geq 2$,
\begin{equation}\label{rate_bound_l_ind}
 \frac{\mathbb{E}[LC_n]}{n} \geq \gamma^*  -  C \sqrt{\frac{\ln n}{n} } - \frac{2}{n}  - (1 - \mathbf{1}_{\mu = \pi}) \left( \frac{1}{\sqrt{n}}+  c\alpha^{\sqrt{n}}   \right) , 
\end{equation}

\noindent where $\alpha \in (0,1), c > 0$ are constants as in~\eqref{eq:doeb} and $C > 0$. All constants  depend on the parameters of the model but not on $n$. Moreover with the same $\alpha$ and $c$, 
\begin{align}
 \label{rate_bound_u_ind}\frac{\mathbb{E}[LC_n]}{n} \leq \gamma^*  + (1 - \mathbf{1}_{\mu = \pi}) \left( \frac{1}{\sqrt{n}}+ c \alpha^{\sqrt{n}}  \right).
\end{align}
\end{cor}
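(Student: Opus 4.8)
The plan is to run the argument of Theorem~\ref{rate_main} almost verbatim, exploiting the fact that the independence and shared-parameter hypotheses render two of its central technical devices either trivial or automatic. First, since $(Z_X, X)$ and $(Z_Y, Y)$ are independent, the vectors $X^{(n)}$ and $Y^{(n)}$ are independent for every $n$, so $\beta(n) = 0$ and hence $\beta^* = 0$; this is exactly why the $2\beta^*$ term present in~\eqref{rate_bound_l} disappears from~\eqref{rate_bound_l_ind}. Second, because the two latent chains share the same initial distribution, transition matrix and emission probabilities, $X$ and $Y$ are identically distributed as processes, whence $(X, Y)$ and $(Y, X)$ have the same law. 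Combined with the symmetry $LCS(u;v) = LCS(v;u)$ of the functional, this exchangeability supplies precisely the ingredient that the symmetric-emission hypothesis provided in Theorem~\ref{rate_main}.

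Concretely, I would reuse the same family of $r$-partitions $\mathcal{B}_{k,n}$, the decomposition $LC_{kn} = \max_{(\nu,\tau)} LC_{kn}(\nu,\tau)$, and aim for the analogue of the key inequality~\eqref{eq:exchange}. The derivation simplifies in two ways. Since $Y^{(n)}$ is already independent of $(Z_X, X)^{(n)}$, Berbee's coupling is unnecessary: one may take $Y^* = Y$, so that every error term $n\beta(kn)$ appearing in~\eqref{eq:2},~\eqref{eq:3} and~\eqref{eq:5} vanishes. The distributional swap carrying~\eqref{eq:4} into~\eqref{eq:5} is then justified by the exchangeability noted above in place of symmetric emissions. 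The outcome is the clean bound
\begin{equation}
\notag \mathbb{E}[LCS(X_{\nu_i}, \ldots, X_{\nu_{i+1}-1}; Y_{\tau_i}, \ldots, Y_{\tau_{i+1}-1})] \leq \tfrac{1}{2}\mathbb{E}[LC_{2n}],
\end{equation}
the $\beta(kn)$-free counterpart of~\eqref{eq:exchange}.

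For the concentration step I would invoke the Hoeffding-type inequality in the form recorded in Remark~\ref{Hoeff_HMM}(i): applied to the enlarged chain $(Z^X_n, Z^Y_n, X_n, Y_n)_{n \geq 1}$, it yields~\eqref{hoeff} with a constant $A$ built from the mixing time of the product chain $(Z^X_n, Z^Y_n)$. Summing over $\mathcal{B}_{k,n}$ and using the unchanged combinatorial bound $|\mathcal{B}_{k,n}| \leq \exp(10 k \ln n)$ (a pure count of partitions), then choosing $t = 2 A \sqrt{10}\,\sqrt{\ln n / n}$ and letting $k \to \infty$, I recover the analogue of~\eqref{eq:hoeff_mod}, now with $\beta^* = 0$. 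Rearranging gives~\eqref{rate_bound_l_ind}, while the odd-length correction and the initial-distribution term $(1 - \mathbf{1}_{\mu = \pi})(1/\sqrt{n} + c\alpha^{\sqrt{n}})$ are supplied, exactly as before, by the coupling estimate~\eqref{eq:error_2HMM} of Remark~\ref{rem:first}(ii). The upper bound~\eqref{rate_bound_u_ind} follows from Fekete's lemma together with~\eqref{eq:error_2HMM} and~\eqref{eq:doeb}, just as~\eqref{rate_bound_u} did.

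Since the scheme is a simplification of an already-established argument, no genuinely new obstacle arises. The one point demanding care is verifying that the exchangeability $(X,Y) \overset{d}{=} (Y,X)$ legitimately replaces the symmetric-emission hypothesis at the single place---the passage~\eqref{eq:4}---where Theorem~\ref{rate_main} invoked symmetry; there both the independence and the shared-parameter assumptions are needed and must be used in tandem.
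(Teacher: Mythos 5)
Your proposal is correct and follows essentially the same route as the paper's own proof: both run the argument of Theorem~\ref{rate_main} with the Hoeffding inequality for the product chain $(Z^X_n, Z^Y_n, X_n, Y_n)_{n\ge1}$ from Remark~\ref{Hoeff_HMM}(i), use independence to dispense with Berbee's coupling (so all $\beta$ terms vanish), and use the fact that independence plus identically distributed marginal processes yields the symmetry needed for the key exchange step~\eqref{eq:exchange}, with the non-stationary start handled via~\eqref{eq:error_2HMM}. The only cosmetic difference is that you phrase the swap as an equality in distribution of $(X,Y)$ and $(Y,X)$, which is in fact the more careful formulation, since the paper's displayed identity holds in law rather than pointwise.
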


\noindent As mentioned in the end of the proof of Theorem~\ref{rate_main}, the symmetry of the distribution of $(X_i, Y_i)$ is used only for proving the lower bound. Let 
\begin{align}
\notag h(n) \coloneqq   \max_{m \in [-n, n]}  \left( 2 \sum_{i = 1}^{n-m} \mathbb{P}(X_i \neq Y_i) + \sum_{i = n-m+1}^{n+m} \mathbb{P}(X_i \neq Y_i) \right).
\end{align}

\noindent Then the following holds:

\begin{prop}\label{prop_asymetry}  Let $(Z, (X, Y))$ be a hidden Markov model, where the sequence $Z$ is an aperiodic time homogeneous and irreducible Markov chain with  finite state space $\mathcal{S}$. Then, for all $n \geq 2$,
\begin{equation}\label{rate_bound_l_sym}
 \frac{\mathbb{E}[LC_n]}{n} \geq \gamma^* - \frac{h(n)}{n}  - 2 \beta^* -  C \sqrt{\frac{\ln n}{n} } - \frac{2}{n}  - (1 - \mathbf{1}_{\mu = \pi}) \left( \frac{1}{\sqrt{n}}+  c\alpha^{\sqrt{n}}   \right).
\end{equation}

\end{prop}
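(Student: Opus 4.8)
The plan is to re-run the proof of Theorem~\ref{rate_main} almost verbatim, after observing that the symmetry hypothesis is used there at exactly one point, the equality~\eqref{eq:4}. Every other ingredient of that proof---Berbee's coupling~\eqref{thm:beta}, the bounds~\eqref{eq:2} and~\eqref{eq:3}, the superadditive combination~\eqref{eq:exchange}, the Hoeffding step~\eqref{hoeff}, the count of $r$-partitions, and the passage $k\to\infty$ leading to~\eqref{eq:hoeff_mod}---relies only on stationarity and on the independence of $Y^{*(n)}$ from $(Z,X)^{(n)}$, neither of which requires symmetry. It therefore suffices to quantify the error committed when~\eqref{eq:4} ceases to be an identity and then to carry that error, unchanged, through the remainder of the argument.

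Write $A\coloneqq\mathbb{E}[LCS(X_1,\ldots,X_{n-m};Y_1^*,\ldots,Y_{n+m}^*)]$ for the (stationary form of the) left-hand side of~\eqref{eq:4} and $B\coloneqq\mathbb{E}[LCS(X_1,\ldots,X_{n+m};Y_1^*,\ldots,Y_{n-m}^*)]$ for its right-hand side. The goal is to show $A\le B+e(m)$ with
\[
e(m)=\sum_{i=1}^{n+m}\mathbb{P}(X_i\neq Y_i)+\sum_{i=1}^{n-m}\mathbb{P}(X_i\neq Y_i)=2\sum_{i=1}^{n-m}\mathbb{P}(X_i\neq Y_i)+\sum_{i=n-m+1}^{n+m}\mathbb{P}(X_i\neq Y_i)\le h(n),
\]
the last bound being the very definition of $h(n)$ as a maximum over $m\in[-n,n]$. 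Two facts are needed: (i) the one-letter Lipschitz estimate $|LCS(u;v)-LCS(u';v)|\le\#\{i:u_i\neq u_i'\}$ for words $u,u'$ of equal length, which is the same per-coordinate sensitivity that produces the vector $c$ in the discussion following Lemma~\ref{lem:hoeff}; and (ii) the fact that the original joint law of $(X,Y)$ provides a coupling of $\mathcal{L}(X)$ and $\mathcal{L}(Y)$ whose expected Hamming distance on the first $N$ coordinates equals $\sum_{i=1}^{N}\mathbb{P}(X_i\neq Y_i)$. Since $X$ and $Y^*$ are independent, I would first use $LCS(u;v)=LCS(v;u)$ to write $A=\mathbb{E}[LCS(Y_1^*,\ldots,Y_{n+m}^*;X_1,\ldots,X_{n-m})]$; then replace the long argument $Y^*$ (law $\mathcal{L}(Y)$) by an independent copy of $X$ (law $\mathcal{L}(X)$), coupled through (ii) and independent of the short argument, at the cost $\sum_{i=1}^{n+m}\mathbb{P}(X_i\neq Y_i)$ dictated by (i); this produces $\mathbb{E}[LCS(X_1,\ldots,X_{n+m};X_1',\ldots,X_{n-m}')]$ for two independent copies of $X$. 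A second application of (i)--(ii), now replacing the short copy $X'$ (law $\mathcal{L}(X)$) by $Y^*$ (law $\mathcal{L}(Y)$) at the cost $\sum_{i=1}^{n-m}\mathbb{P}(X_i\neq Y_i)$, returns exactly $B$, and the two costs add up to $e(m)$.

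Feeding $A\le B+e(m)\le B+h(n)$ in place of the identity~\eqref{eq:4}, the modified~\eqref{eq:5} reads $A\le\mathbb{E}[LCS(X_{n-m+1},\ldots,X_{2n};Y_{n+m+1},\ldots,Y_{2n})]+n\beta(kn)+h(n)$; averaging it with~\eqref{eq:3} as in~\eqref{eq:exchange} therefore turns the block bound into $\tfrac12\mathbb{E}[LC_{2n}]+2n\beta(kn)+h(n)/2$, i.e.\ it inserts an extra $h(n)$ inside the parentheses of $\tfrac{r}{2}\bigl(\mathbb{E}[LC_{2n}]+4n\beta(kn)\bigr)$. Propagating this $h(n)/(2n)$ term through the $k\to\infty$ limit in~\eqref{eq:hoeff_mod} (where it is untouched, being independent of $k$) yields $\mathbb{E}[LC_{2n}]/(2n)\ge\gamma^*-2\beta^*-h(n)/(2n)-C\sqrt{\ln n/n}-1/(2n)$, the odd lengths being handled exactly as in the proof of Theorem~\ref{rate_main}. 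To state the correction uniformly as $h(n)/n$ for a word of length $n$, rather than the sharper $h(n/2)/n$ that the computation produces, I would record the elementary monotonicity $h(n)\le h(n+1)$, proved by comparing the two maximands term by term for a common $m$; combined with~\eqref{eq:mean_coupling} for the initial-distribution correction this gives~\eqref{rate_bound_l_sym}. I expect the coupling of the second paragraph to be the only delicate point---specifically, checking that the two replacement costs sum to precisely the maximand defining $h(n)$ and that all independence relations survive both couplings---while the rest is bookkeeping identical to that of Theorem~\ref{rate_main}.
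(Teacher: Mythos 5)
Your proposal is correct and takes essentially the same route as the paper's (sketched) proof: both identify~\eqref{eq:4} as the only place symmetry enters, replace that identity by a bound coming from the one-letter Lipschitz property of the LCS whose total cost is exactly the maximand defining $h(n)$, and then propagate the resulting $h(n)/2$ through~\eqref{eq:exchange} and the $k\to\infty$ limit unchanged. The only difference is in implementation: the paper performs a single pathwise swap on the coupled pair $(X,Y)$ itself, namely $LCS(X_1,\ldots,X_{n-m};Y_1,\ldots,Y_{n+m}) \le LCS(Y_1,\ldots,Y_{n-m};X_1,\ldots,X_{n+m}) + 2\sum_{i=1}^{n-m}\mathbf{1}_{X_i\neq Y_i} + \sum_{i=n-m+1}^{n+m}\mathbf{1}_{X_i\neq Y_i}$, and takes expectations, whereas you carry out two expectation-level replacements through independent copies so as to stay within the $Y^*$-decoupled framework (which also lets~\eqref{eq:5} apply verbatim, and your explicit monotonicity of $h$ patches a point the paper glosses over); both yield the same error and the same modified form of~\eqref{eq:exchange}.
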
 

\noindent For a sketch of  proof  of this proposition, and some comments on $h(n)$, we again refer the reader to the Appendix.



\appendix

\section{}

First, as asserted in Remark~\ref{Hoeff_HMM} (ii), we provide two proofs of the fact that the mixing time $\tau(\ep)$ of $(Z_n, X_n, Y_n)_{n \geq 1}$ is the same as  the mixing time $\tilde{\tau}(\ep)$ of the chain $(Z_n)_{n \geq 1}$.

\begin{proof}[Proof 1]  let $\tilde{T} = (\tilde{T}_n)_{n \geq 1}$ be a Markov chain with  finite state space $\mathcal{S}$. Each $\tilde{T}_i$ emits an observed variable $T_i$ according to some probability distribution that depends only on the state $\tilde{T}_i$. Let $T = (T_n)_{n \geq 1}$ and assume $T_i \in \mathcal{A}$ - a finite alphabet. Note that $(\tilde{T}, T)$ is a Markov chain; let $\tau(\ep)$ be its mixing time, and let $\tilde{\tau}(\ep)$ be the mixing time for the hidden chain $\tilde{T}$.  Then,
\begin{align}
\notag d_{TV} & ( \mathcal{L} ((\tilde{T}_{i+t}, T_{i+t}) | (\tilde{T}_i, T_i) = (x, u) ) , \mathcal{L} ((\tilde{T}_{i+t}, T_{i+t}) | (\tilde{T}_i, T_i) = (y, v) ))\\
\notag = & \quad \frac{1}{2} \sum_{(z, w)\in \mathcal{S} \times \mathcal{A}} \bigg| \mathbb{P} ((\tilde{T}_{i+t}, T_{i+t}) = (z, w) | (\tilde{T}_i, T_i) = (x, u) ) -\\
\notag & \quad \quad \quad \quad \quad \quad \quad \quad\quad \quad -  \mathbb{P} ((\tilde{T}_{i+t}, T_{i+t}) = (z, w) | (\tilde{T}_i, T_i) = (y, v) \bigg| \\
\notag = & \quad \frac{1}{2}  \sum_{(z, w) } \bigg| \mathbb{P}(\tilde{T}_{i+t}  = z |\tilde{T}_i = x) \mathbb{P} (z \to w) -  \mathbb{P}(\tilde{T}_{i+t}  = z |\tilde{T}_i = y) \mathbb{P} (z \to w) \bigg| \\
\notag = & \quad \frac{1}{2} \sum_{(z, w)} \mathbb{P}(z \to w) \bigg| \mathbb{P}(\tilde{T}_{i+t}  = z |\tilde{T}_i = x) -  \mathbb{P}(\tilde{T}_{i+t}  = z |\tilde{T}_i = y)\bigg| \\
\notag = &\quad  \frac{1}{2} \sum_{z \in \mathcal{S}} \bigg|  \mathbb{P}(\tilde{T}_{i+t}  = z |\tilde{T}_i = x) - \mathbb{P}(\tilde{T}_{i+t}  = z|\tilde{T}_i = y) \bigg| \\
\notag = &\quad  d_{TV} ( \mathcal{L} (\tilde{T}_{i+t} | \tilde{T}_i = x ) , \mathcal{L} (\tilde{T}_{i+t} | \tilde{T}_i = y )), 
\end{align}

\noindent where $\mathbb{P}(z \to w) \coloneqq \mathbb{P} (T_i = w | \tilde{T}_i = z)$, i.e., the probability that a state with value $z \in \mathcal{S}$ emits $w \in \mathcal{A}$.  By definition of $T$ and $\tilde{T}$ this last probability does not depend on $i$. Then $\sum_{w \in \mathcal{A}} \mathbb{P}(z \to w) = 1$. Therefore, $\overline{d}_{(\tilde{T}, T)}(t) = \overline{d}_{\tilde{T}} (t)$ and $\tau(\ep) =  \tilde{\tau}(\ep)$. 

\end{proof}

\begin{proof}[Proof 2]  An alternative approach to proving the result of Remark~\ref{Hoeff_HMM} (ii)  relies on coupling arguments and was kindly suggested by D. Paulin in personal communications with the authors. First, recall the following classical result~\cite[Proposition 4.7]{LPW}:

\begin{lem}\label{lem:coup} Let $\mu$ and $\nu$ be two probability distributions on $\Omega$. Then,
\begin{align}
\notag d_{TV}(\mu, \nu) = \inf \{ \mathbb{P}(X \neq Y) : (X, Y) \text{ is a coupling of } \mu \text{ and } \nu \}.
\end{align}

\noindent Moreover, there is a coupling $(X,Y)$ which attains the infimum and such a coupling is called optimal.

\end{lem}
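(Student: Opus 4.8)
The plan is to prove the two matching inequalities that together identify the infimum, and then to exhibit an explicit coupling that attains it. Throughout I would use the elementary reformulation $d_{TV}(\mu,\nu) = \sup_{A \subseteq \Omega}\left(\mu(A) - \nu(A)\right) = \frac{1}{2}\sum_{x \in \Omega}|\mu(x)-\nu(x)|$, with the supremum attained at $A_* \coloneqq \{x : \mu(x) \geq \nu(x)\}$; this is standard and I would take it as known.

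First I would establish the lower bound $d_{TV}(\mu,\nu) \leq \mathbb{P}(X \neq Y)$ for \emph{every} coupling $(X,Y)$ of $\mu$ and $\nu$. Fixing such a coupling and an arbitrary event $A$, I would split $\mathbb{P}(X \in A)$ and $\mathbb{P}(Y \in A)$ according to whether the other coordinate lies in $A$, which yields $\mu(A) - \nu(A) = \mathbb{P}(X \in A, Y \notin A) - \mathbb{P}(X \notin A, Y \in A) \leq \mathbb{P}(X \in A, Y \notin A) \leq \mathbb{P}(X \neq Y)$. Taking the supremum over $A$ gives $d_{TV}(\mu,\nu) \leq \mathbb{P}(X \neq Y)$, and hence $d_{TV}(\mu,\nu) \leq \inf\{\mathbb{P}(X \neq Y)\}$ over all couplings.

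Then I would construct a coupling meeting this bound, which simultaneously proves the reverse inequality and the attainment claim. Set $p \coloneqq \sum_{x} \min(\mu(x),\nu(x))$, so that $p = 1 - d_{TV}(\mu,\nu)$. Define $(X,Y)$ by a biased coin with success probability $p$: on success draw a common value from the mass function $x \mapsto \min(\mu(x),\nu(x))/p$ and set $X = Y$; on failure draw $X$ from $x \mapsto (\mu(x)-\nu(x))_+/d_{TV}(\mu,\nu)$ and, independently, $Y$ from $x \mapsto (\nu(x)-\mu(x))_+/d_{TV}(\mu,\nu)$. A short verification confirms the marginals, since for each $x$ one has $\mathbb{P}(X=x) = \min(\mu(x),\nu(x)) + (\mu(x)-\nu(x))_+ = \mu(x)$, and symmetrically $\mathbb{P}(Y=x)=\nu(x)$. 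Because on the success event $X = Y$, while on the failure event $X$ and $Y$ are supported on the disjoint sets $\{\mu > \nu\}$ and $\{\nu > \mu\}$, one gets $\mathbb{P}(X \neq Y) = 1 - p = d_{TV}(\mu,\nu)$, and the infimum is thus attained.

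I do not expect a genuine obstacle, as this is a classical fact; the only points requiring care are the bookkeeping that the two defect masses $\sum_x(\mu(x)-\nu(x))_+$ and $\sum_x(\nu(x)-\mu(x))_+$ are equal, which forces each to equal $d_{TV}(\mu,\nu)$ so that the two conditional laws are honest probability distributions, and the degenerate case $d_{TV}(\mu,\nu) = 0$, where $\mu = \nu$ and the recipe above collapses to the diagonal coupling $X = Y$.
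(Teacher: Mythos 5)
Your proof is correct and complete. The paper does not actually prove this lemma---it is recalled as a classical fact, citing Proposition 4.7 of Levin, Peres and Wilmer---and your argument (the event-splitting inequality $\mu(A)-\nu(A) \leq \mathbb{P}(X\neq Y)$ for the lower bound, plus the explicit coin-flip coupling built from the common mass $\sum_x \min(\mu(x),\nu(x))$ and the two normalized defect measures) is precisely the standard proof given in that reference, including the necessary bookkeeping that the two defect masses coincide and the degenerate case $d_{TV}(\mu,\nu)=0$.
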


 \noindent Let $(\tilde{T}^1, \tilde{T}^2)$ be an optimal coupling according to $d_{TV} ( \mathcal{L} (\tilde{T}_{t} | \tilde{T}_1 = x ) , \mathcal{L} (\tilde{T}_{t} | \tilde{T}_1 = y ))$ for some $x, y \in \mathcal{S}$, i.e., $\tilde{T}^1$ and $\tilde{T}^2$ are Markov chains with the same transition probability as $\tilde{T}$, $\tilde{T}_0^1 = x$, $\tilde{T}_0^2 = y$, and 
\begin{align}
\label{3.4_lower} \mathbb{P}  (\tilde{T}_t^1  \neq  \tilde{T}_t^2) = &  \quad d_{TV} ( \mathcal{L} (\tilde{T}_{t} | \tilde{T}_1 = x ) , \mathcal{L} (\tilde{T}_{t} | \tilde{T}_1 = y ))
\end{align}

\noindent Next let $T_t^1$ and $T_t^2$ be respectively distributed according to the distributions associated with $\tilde{T}_t^1$ and $\tilde{T}_t^2$ and be independent of all the other random variables. In addition, if for some $t \geq 1$,  $\tilde{T}_t^1 = \tilde{T}_t^2$, then $T_t^1 = T_t^2$. Then
\begin{align}
\notag \mathbb{P} (\tilde{T}_t^1 \neq \tilde{T}_t^2) = \mathbb{P} \left((\tilde{T}_t^1, T_t^1) \neq (\tilde{T}_t^2, T_t^2)\right),
\end{align}

\noindent and by Lemma~\ref{lem:coup}, for any $u, v \in \mathcal{A}$ and any $i \geq 1$, 
\begin{align}
\notag \mathbb{P} &  \left((\tilde{T}_t^1, T_t^1)  \neq (\tilde{T}_t^2, T_t^2)\right)  \\ 
\notag \geq & \quad   d_{TV} \left( \mathcal{L} ((\tilde{T}_{i+t}, T_{i+t}) | (\tilde{T}_i, T_i) = (x, u) ) , \mathcal{L} ((\tilde{T}_{i+t}, T_{i+t}) | (\tilde{T}_i, T_i) = (y, v) )\right).
\end{align}

\noindent Together with~\eqref{3.4_lower}, the above yields
\begin{align}
\notag d_{TV} ( \mathcal{L} (\tilde{T}_{t} | \tilde{T}_1 = x ) &, \mathcal{L} (\tilde{T}_{t} | \tilde{T}_1 = y ))  \geq \\ 
\notag \geq \quad d_{TV} \bigg( \mathcal{L} ((\tilde{T}_{i+t}, T_{i+t}) & | (\tilde{T}_i, T_i) = (x, u) )  , \mathcal{L} ((\tilde{T}_{i+t}, T_{i+t}) | (\tilde{T}_i, T_i) = (y, v) )\bigg)
\end{align}

\noindent Taking the $\sup$ over $x, y, u, v$ gives  $\overline{d}_{(\tilde{T}, T)}(t)  \leq \overline{d}_{\tilde{T}} (t)$.

\noindent For the reverse inequality, consider the optimal coupling $\left((\tilde{T}^1, T^1), (\tilde{T}^2, T^2)\right)$ according to $d_{TV} \left( \mathcal{L} ((\tilde{T}_{t}, T_{t}) | (\tilde{T}_1, T_1) = (x, u) ) , \mathcal{L} ((\tilde{T}_{t}, T_{t}) | (\tilde{T}_1, T_1) = (y, v) )\right)$, for some $x, y \in \mathcal{S}$ and $u,v \in \mathcal{A}$. Then,
\begin{align}
\label{3.4_upper1} \mathbb{P}  \left((\tilde{T}_t^1, T_t^1)  \neq (\tilde{T}_t^2, T_t^2)\right) =  d_{TV} \left( \mathcal{L} ((\tilde{T}_{t}, T_{t}) | (\tilde{T}_1, T_1) = (x, u) ) , \mathcal{L} ((\tilde{T}_{t}, T_{t}) | (\tilde{T}_1, T_1) = (y, v) )\right),  
\end{align}

\noindent and 
\begin{align}
\notag \mathbb{P}  \left((\tilde{T}_t^1, T_t^1)  \neq (\tilde{T}_t^2, T_t^2)\right)  \geq \mathbb{P}  (\tilde{T}_t^1 \neq \tilde{T}_t^2). 
\end{align}

\noindent However, by the  Lemma~\ref{lem:coup}, for any $i \geq 1$, 
\begin{align}
\label{3.4_upper2} \mathbb{P}  (\tilde{T}_t^1 \neq \tilde{T}_t^2) \geq d_{TV} ( \mathcal{L} (\tilde{T}_{i+t} | \tilde{T}_i = x ) , \mathcal{L} (\tilde{T}_{i+t} | \tilde{T}_i = y )).
\end{align}

\noindent Taking the $\sup$ in~\eqref{3.4_upper1} and~\eqref{3.4_upper2} gives,  $\overline{d}_{(\tilde{T}, T)}(t) \geq \overline{d}_{\tilde{T}} (t)$, and then $\overline{d}_{(\tilde{T}, T)}(t)  = \overline{d}_{\tilde{T}} (t)$.

\end{proof}

\begin{proof}[Proof of Corollary~\ref{cor_indep}]  The Hoeffding inequality~\eqref{hoeff} holds as long as $(Z, X, Y)$ is a Markov chain. In addition, $(X, Y)$ has to be symmetric (see proof of Proposition~\ref{prop_asymetry}) in order for~\eqref{eq:exchange} to hold. Again one such setting is when $X$ and $Y$ are two independent HMM  with the same transition matrix for the latent chain and same emission probabilities.  A rate of convergence result then  follows from arguments as in Section~\ref{s:RC}. The bound on $\mathcal{B}_{k,n}$ is the same, and there is  a Hoeffding type inequality for this model as per Remark~\ref{Hoeff_HMM} (i). One thing that differs is the bound~\eqref{eq:exchange}. In the present case it is much easier to get. When started at the stationary distribution, by  stationarity, independence and symmetry, one has:
\begin{equation}
\begin{split}
\notag LCS (X_{\nu_i} , \ldots, X_{\nu_{i+1} - 1}; Y_{\tau_i} , \ldots, Y_{\tau_{i+1} - 1}) & \leq LCS(X_1, \ldots , X_{n-m}, Y_1 , \ldots, Y_{n+m}) \\
\notag & = LCS(X_1, \ldots , X_{n+m}; Y_1 , \ldots, Y_{n-m}) \\
\notag & \leq \frac{1}{2} LC_{2n}.
\end{split}
\end{equation} 

\noindent In particular, there is no need to introduce mixing coefficients in this case ($\beta = 0$).  When the hidden chains are not started at the stationary distribution one gets an error as in~\eqref{eq:error_2HMM}. Then Theorem~\ref{rate_main} holds but with constants depending on the new model.  Moreover, this setting reduces to the one where $X$ and $Y$ are independent Markov chains by letting each state of the hidden chains emit a unique letter, which can further recover the iid case originally obtained in~\cite{A}.

\end{proof}

\begin{proof} [Proof of Proposition~\ref{prop_asymetry}] The symmetry of the distribution of $(X, Y)$ is only used  to get~\eqref{eq:4}, which entails that for any  $m \in \{-n+1, \ldots,  n-1\}$,  $LCS(X_1, \ldots, X_{n-m} ; $ $  Y_1, \ldots, Y_{n+m} )$ and $LCS(X_1, \ldots, X_{n+m} ; $ $ Y_1, \ldots, Y_{n-m} )$ are identically distributed and upper bounded by half of $LC_{2n}$. Such a result yields a comparison between $\mathbb{E}[LC_{2n}]$ and $\mathbb{E}[LC_{kn}]$, leading as $k \to \infty$, to a lower bound on $\mathbb{E}[LC_{2n}]$ involving $\gamma^*$.   Without assuming symmetry, the step~\eqref{eq:4} in obtaining~\eqref{eq:exchange} needs to be modified. One way to do so is to make  use of  the Lipschitz property of the LCS to get the following estimate:
\begin{align}
\notag LCS & (X_1, \ldots, X_{n-m} ; Y_1, \ldots, Y_{n+m}) \\
\notag = & \quad LCS  (X_1, \ldots, X_{n+m} ; Y_1, \ldots, Y_{n-m}) + \bigg( LCS  (X_1, \ldots, X_{n-m} ; Y_1, \ldots, Y_{n+m}) \\
\notag & \quad \quad \quad  - LCS  (X_1, \ldots, X_{n+m} ; Y_1, \ldots, Y_{n-m}) \bigg) \\
\notag \leq & \quad LCS  (Y_1, \ldots, Y_{n-m} ; X_1, \ldots, X_{n+m}) +  2 \sum_{i = 1}^{n-m} \mathbf{1}_{X_i \neq Y_i} + \sum_{i = n-m+1}^{n+m} \mathbf{1}_{X_i \neq Y_i}. 
\end{align}

\noindent Taking expectations, then~\eqref{eq:exchange} becomes

\begin{equation}
\notag \mathbb{E}  [ LCS (X_{\nu_i} , \ldots, X_{\nu_{i+1} - 1}; Y_{\tau_i} , \ldots, Y_{\tau_{i+1} - 1})] \leq \frac{1}{2} \bigg( \mathbb{E}[LC_{2n}] + h(n) \bigg)  + 2 n\beta(kn) , 
\end{equation}

\noindent where $h(n) \coloneqq   \max_{m \in [-n, n]} \left( 2 \sum_{i = 1}^{n-m} \mathbb{P}(X_i \neq Y_i) + \sum_{i = n-m+1}^{n+m} \mathbb{P}(X_i \neq Y_i) \right)$. This leads to a non-symmetric version of~\eqref{rate_bound_l}, namely,

\begin{equation}
\label{rate_bound_l_sym1}  \frac{\mathbb{E}[LC_n]}{n} \geq \gamma^* -  C \sqrt{\frac{\ln n}{n} } - \frac{ h(n) }{n} - 2 \beta^* - \frac{1}{n-2}  - (1 - \mathbf{1}_{\mu = \pi}) \left( \frac{1}{\sqrt{n}}+  c\alpha^{\sqrt{n}}   \right).
\end{equation}

\end{proof}

If  $h(n) = O(\sqrt{n  \ln n } )$, then the rate in~\eqref{rate_bound_l_sym} or~\eqref{rate_bound_l_sym1} will be the same as in~\eqref{rate_bound_l}. Such will be the case when $(Z', X)$ and $(Z'', Y)$ are two independent hidden Markov models and $Z = (Z', Z'') $ is a coupling of the two latent chains such that if  $Z'_i = Z_i''$, then $Z_j' = Z_j''$ for any $j  > i$. Then, $(Z, (X, Y))$ is a hidden Markov model where $X_i = Y_i$ once the two latent chains have met, and by~\eqref{eq:doeb} $h(n) = O(\sqrt{n \log n})$. 

However, $h(n)$ can be much larger, e.g, of order $n$. A case in hand is when the $X_i$ and $Y_i$ are  iid Bernoulli random variables with parameters $1/3$ and $1/2$ respectively. Then $\mathbb{P}(X_i \neq Y_i) = \mathbb{P}(X_i = 0, Y_i = 1) + \mathbb{P} (X_i = 1, Y_i =0) = 1/6 + 2/6 = 1/2$, for all $i \geq 1$,  and 
\begin{align} 
\notag  \left( 2 \sum_{i = 1}^{n-m} \mathbb{P}(X_i \neq Y_i) + \sum_{i = n-m+1}^{n+m} \mathbb{P}(X_i \neq Y_i) \right) = \big( 2 (n-m) 1/2 + (2m) 1/2\big) = n.
\end{align}

\noindent Note also that when $X = (X_i)_{i \geq 1}$ and $Y = (Y_i)_{i \geq 1}$ are independent sequences of random variables, the symmetry assumption is equivalent to $X$ and $Y$ being identically distributed.

\acks

This material is based in part upon work supported by the National Science Foundation under Grant No. 1440140, while the first author was in residence at the Mathematical Sciences Research Institute in Berkeley, California, during the Fall semester of 2017. His research was also supported in part by the grants \#246283 and \# 524678 from the Simons Foundation. The second author was  partially supported from the TRIAD NSF grant (award 1740776). Both authors are grateful to J. Spouge and S. Eddy for encouragements and correspondence on the relevance of the hidden Markov models in computational biology.

\end{document}